 \theoremstyle{plain}
\newtheorem{thm}{Theorem}[section]
  \theoremstyle{plain}
  \newtheorem{lem}[thm]{Lemma}
  \theoremstyle{plain}
  \newtheorem{prop}[thm]{Proposition}
  \theoremstyle{definition}
  \newtheorem{defn}[thm]{Definition}
  \theoremstyle{plain}
  \newtheorem{cor}[thm]{Corollary}
  \theoremstyle{remark}
  \newtheorem{rem}[thm]{Remark}
\numberwithin{equation}{section}
\begin{document}

\title{limit theorems for additive c-free convolution}

\author{jiun-chau wang}

\date{May 13, 2008}

\begin{abstract}
In this paper we determine the limiting distributional behavior for
sums of infinitesimal c-free random variables. We show that the weak
convergence of classical convolution and that of c-free convolution
are equivalent for measures in an infinitesimal triangular array,
where the measures may have unbounded support. Moreover, we use these
limit theorems to study the c-free infinite divisibility and stability.
These results are obtained by complex analytic methods without reference
to the combinatorics of c-free convolution.
\end{abstract}

\address{Department of Mathematics, Indiana University, Bloomington, Indiana
47405.}

\email{freeprobability@gmail.com}

\subjclass[2000]{Primary: 46L53; Secondary: 60F05.}

\keywords{Additive c-free convolution; Limit theorems; Infinitesimal arrays. }

\maketitle

\section{Introduction}

The theory of the conditionally free (abbreviated as c-free) random
variables was introduced by Bo\.{z}ejko, Leinert and Speicher in
\cite{BLS}, as a generalization of Voiculescu's freeness to the algebras
with two states. The concept of c-freeness leads to a binary operation,
called additive c-free convolution, on pairs of compactly supported
probability measures on the real line. The c-free analogues of central
and Poisson limit theorems for identically distributed summands were
also proved in \cite{BLS}. The development of the c-free probability
theory relies heavily on the combinatorics of non-crossing partitions.
The nature of the combinatorial tools makes it difficult to discuss
limit theorems when the measures do not have finite moments. Even
for finite moments the limit theorems proved in \cite{BLS} and \cite{BB}
require subtle combinatorics arguments.

The aim of this paper is to provide an analytic approach to study
the asymptotic distributional behavior of additive c-free convolution.
As shown in \cite{Mulcfree}, the same approach also works in the
multiplicative context. The extension of (additive) c-free convolution
to measures with unbounded support was done by Belinschi \cite{SerbanNotes}.
His work provided useful inspirations for some of the analytic questions
in our approach, as will be seen below.

The remainder of this paper is organized as follows. In Section 2
we deal with the analytic problems involved in using an analogue of
Voiculescu's $R$-transform for measures without bounded support,
and we extend the definition of c-free convolution to pairs of arbitrary
measures using this transform. Section 3 contains the main result
of this paper (Theorem 3.5), which provides necessary and sufficient
conditions for the weak convergence of c-free convolution of measures
in an infinitesimal array. In Section 4 we present various characterizations
of c-free infinite divisibility, which extend the results in \cite{Krystek}
for pairs of compactly supported measures. Section 5 contains a brief
discussion of c-free stability.

\section{Setting and Basic Properties}

In this section we focus on the analytic apparatus needed for the
calculation of c-free convolution. Most of the results we quoted from
the literature were developed for studying the free and boolean convolutions.
We refer the reader to the book \cite{VDN} for a comprehensive introduction
to free probability theory, and to the papers \cite{SW,BerBoolean}
for a detailed treatment of boolean probability theory.

\subsection{Cauchy transforms and c-free convolution }

Denote by $\mathcal{M}$ the family of all Borel probability measures
on the real line $\mathbb{R}$ and set $\mathbb{C}^{+}=\{ z\in\mathbb{C}:\,\Im z>0\}$,
$\mathbb{C}^{-}=-\mathbb{C}^{+}$. We associate each measure $\mu\in\mathcal{M}$
its \emph{Cauchy transform} \[
G_{\mu}(z)=\int_{-\infty}^{\infty}\frac{1}{z-t}\, d\mu(t),\qquad z\in\mathbb{C}^{+},\]
and its reciprocal $F_{\mu}=1/G_{\mu}:\,\mathbb{C}^{+}\rightarrow\mathbb{C}^{+}$.
The measure $\mu$ can be recovered from $G_{\mu}$ as the $\text{weak}^{*}$-limit
of the measures\[
d\nu_{y}(x)=-\frac{1}{\pi}\Im G_{\mu}(x+iy)\, dx\]
as $y\rightarrow0^{+}$. For $\alpha,\beta>0$, we define the cone
$\Gamma_{\alpha}=\{ x+iy\in\mathbb{C}^{+}:\,\left|x\right|<\alpha y\}$
and the truncated cone $\Gamma_{\alpha,\beta}=\{ x+iy\in\Gamma_{\alpha}:\, y>\beta\}$.
As shown in \cite{BVunbdd}, we have $\Im z\leq\Im F_{\mu}(z)$ for
$z\in\mathbb{C}^{+}$ and \begin{equation}
F_{\mu}(z)=z(1+o(1)),\qquad z\in\mathbb{C}^{+},\label{eq:2.1}\end{equation}
as $z\rightarrow\infty$ \emph{nontangentially} (i.e., $\left|z\right|\rightarrow\infty$
but $z$ stays within a cone $\Gamma_{\alpha}$ for some $\alpha>0$.)
The measure $\mu$ is uniquely determined by the function $F_{\mu}$,
and conversely, any analytic function $F:\,\mathbb{C}^{+}\rightarrow\mathbb{C}^{+}$
so that $F(z)=z(1+o(1))$ as $z\rightarrow\infty$ nontangentially
is of the form $F_{\mu}$ for a unique probability measure $\mu$
on $\mathbb{R}$.

Property (2.1) also implies that, for every $\alpha>0$, there exists
$\beta=\beta(\mu,\alpha)>0$ such that the function $F_{\mu}$ has
a left inverse $F_{\mu}^{-1}$ (relative to composition) defined in
$\Gamma_{\alpha,\beta}$. Moreover, we see that $F_{\mu}^{-1}(z)=z(1+o(1))$
as $z\rightarrow\infty$ nontangentially. For $\mu,\nu\in\mathcal{M}$,
the \emph{additive free convolution} $\mu\boxplus\nu\in\mathcal{M}$
is characterized \cite{BVunbdd} by the identity \[
F_{\mu\boxplus\nu}^{-1}(z)+z=F_{\mu}^{-1}(z)+F_{\nu}^{-1}(z),\]
where $z$ is in a truncated cone $\Gamma_{\alpha,\beta}$ contained
in the domain of all involved functions.

For a measure $\mu\in\mathcal{M}$, observe that the function $E_{\mu}(z)=z-F_{\mu}(z)$
takes values in $\mathbb{C}^{-}\cup\mathbb{R}$ and $E_{\mu}(z)=o(\left|z\right|)$
as $z\rightarrow\infty$ nontangentially. Conversely, any analytic
function $E:\,\mathbb{C}^{+}\rightarrow\mathbb{C}^{-}\cup\mathbb{R}$
with these properties is of the form $E_{\mu}$ for a unique probability
measure $\mu$. The \emph{additive boolean convolution} $\mu\uplus\nu\in\mathcal{M}$
of two measures $\mu,\nu\in\mathcal{M}$ is characterized \cite{SW,BerBoolean}
by \[
E_{\mu\uplus\nu}(z)=E_{\mu}(z)+E_{\nu}(z),\qquad z\in\mathbb{C}^{+}.\]

The theory of c-free convolution for pairs of compactly supported
probability measures was first studied in \cite{BLS}. The c-free
convolution $(\mu_{1},\nu_{1})\boxplus_{\text{c}}(\mu_{2},\nu_{2})$
of such pairs is again a pair of compactly supported probability measures
$(\widetilde{\mu},\widetilde{\nu})$, where the measure $\widetilde{\nu}=\nu_{1}\boxplus\nu_{2}$.
In order to describe the measure $\widetilde{\mu}$, these authors
further introduced, for a pair of compactly supported measures $(\mu,\nu)$,
the analytic function \[
C_{(\mu,\nu)}(z)=z\left[E_{\mu}\left(G_{\nu}^{-1}(z)\right)\right],\]
where the inversion of $G_{\nu}$ is carried out in a neighborhood
of $\infty$, and they proved that \[
C_{(\widetilde{\mu},\widetilde{\nu})}(z)=C_{(\mu_{1},\nu_{1})}(z)+C_{(\mu_{2},\nu_{2})}(z).\]

The starting point for the treatment of measures with unbounded support
is observing that, for arbitrary measures $\mu,\nu\in\mathcal{M}$,
the function $C_{(\mu,\nu)}$ is actually defined in an appropriate
domain. For measures $\mu,\nu\in\mathcal{M}$, we introduce a new
function \begin{equation}
\Phi_{(\mu,\nu)}(z)=E_{\mu}\left(F_{\nu}^{-1}(z)\right)\label{eq:2.2}\end{equation}
in a truncated cone $\Gamma_{\alpha,\beta}$ where the function $F_{\nu}^{-1}$
is defined. The function $\Phi_{(\mu,\nu)}$ is obtained from the
function $C_{(\mu,\nu)}(z)/z$ by a change of variable $z\mapsto1/z$,
and is more suitable for our purposes. It is easy to verify that we
have \[
\Phi_{(\widetilde{\mu},\widetilde{\nu})}(z)=\Phi_{(\mu_{1},\nu_{1})}(z)+\Phi_{(\mu_{2},\nu_{2})}(z)\]
in the case of compactly supported measures.

We will require the following result from \cite{BPstable}, whose
proof is based on the Cauchy integral formula.

\begin{lem}
Let $\alpha,\beta,\varepsilon$ be positive numbers, and let $\phi:\,\Gamma_{\alpha,\beta}\rightarrow\mathbb{C}$
be an analytic function such that $\left|\phi(z)\right|\leq\varepsilon\left|z\right|$
for every $z\in\Gamma_{\alpha,\beta}$. Then, for every $\alpha^{\prime}<\alpha$
and $\beta^{\prime}>\beta$, there exists $K>0$ such that the derivative
$\phi^{\prime}(z)$ is estimated as follows \[
\left|\phi^{\prime}(z)\right|\leq K\varepsilon,\qquad z\in\Gamma_{\alpha^{\prime},\beta^{\prime}}.\]

\end{lem}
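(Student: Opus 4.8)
The plan is to prove a Cauchy-integral estimate for the derivative of a bounded analytic function on a truncated cone, so the natural tool is the Cauchy integral formula applied on small circles that stay inside the (slightly larger) cone.

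First I would fix $\alpha' < \alpha$ and $\beta' > \beta$ and observe that there is a geometric ``buffer'': one can choose a radius $r = r(z)$ proportional to $\Im z$ (equivalently to $|z|$, since we are inside a cone) so that the closed disc $\overline{D(z,r)}$ is contained in $\Gamma_{\alpha,\beta}$ for every $z \in \Gamma_{\alpha',\beta'}$. Concretely, since $z = x+iy$ with $|x| < \alpha' y$ and $y > \beta'$, a disc of radius $r = cy$ about $z$ stays in $\Gamma_{\alpha}$ provided $c$ is small enough that $|x| + cy < \alpha(y - cy)$, which holds uniformly because $\alpha' < \alpha$; and it stays above the line $\Im w = \beta$ provided $cy < y - \beta$, which holds uniformly because $y > \beta'> \beta$ forces $y - \beta \ge (1 - \beta/\beta') y$, so any $c < 1 - \beta/\beta'$ works. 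Taking the smaller of the two constants gives a single $c \in (0,1)$, depending only on $\alpha, \alpha', \beta, \beta'$, with $\overline{D(z, cy)} \subset \Gamma_{\alpha,\beta}$ for all $z \in \Gamma_{\alpha',\beta'}$.

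Next I apply the Cauchy estimate: for such $z$,
\[
\phi'(z) = \frac{1}{2\pi i} \int_{|w-z| = cy} \frac{\phi(w)}{(w-z)^{2}}\, dw,
\]
so that
\[
|\phi'(z)| \le \frac{1}{2\pi} \cdot \frac{2\pi c y \cdot \sup_{|w - z| = cy} |\phi(w)|}{(cy)^{2}} = \frac{1}{c y}\, \sup_{|w-z| = cy} |\phi(w)|.
\]
On that circle $|w| \le |z| + cy$, and since $z \in \Gamma_{\alpha'}$ we have $|z| \le \sqrt{1 + \alpha'^{2}}\, y$, hence $|w| \le (\sqrt{1+\alpha'^{2}} + c)\, y =: c_{1} y$. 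Combining with the hypothesis $|\phi(w)| \le \varepsilon |w|$ gives $|\phi'(z)| \le (c_{1}/c)\, \varepsilon$, so the claim holds with $K = c_{1}/c$, a constant depending only on $\alpha, \alpha', \beta, \beta'$ but not on $\varepsilon$ or $z$.

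I do not anticipate a serious obstacle here; the only point requiring care is the explicit verification that the buffer constant $c$ can be chosen uniformly over the whole truncated cone $\Gamma_{\alpha',\beta'}$ — both the ``opening'' condition (controlled by $\alpha' < \alpha$) and the ``height'' condition (controlled by $\beta' > \beta$) must be satisfied simultaneously by scaling the radius with $\Im z$ rather than using a fixed radius, and one should check that the resulting bound on $|w|$ is still linear in $|z|$ so that the factor $\varepsilon$ (and not $\varepsilon$ times something growing) survives. Everything else is the standard Cauchy derivative bound.
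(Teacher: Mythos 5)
Your proof is correct and follows exactly the route the paper indicates for this lemma (it is quoted from Bercovici--Pata with the remark that the proof rests on the Cauchy integral formula): a Cauchy derivative estimate on discs of radius proportional to $\Im z$ that fit inside $\Gamma_{\alpha,\beta}$ uniformly for $z\in\Gamma_{\alpha^{\prime},\beta^{\prime}}$. The geometric verification of the buffer constant $c$ and the linear bound $|w|\leq c_{1}\Im z$ on the circles are both carried out correctly, so the constant $K=c_{1}/c$ depends only on $\alpha,\alpha^{\prime},\beta,\beta^{\prime}$ as required.
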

The following result was first noted in \cite{SerbanNotes}.

\begin{prop}
Let $\mu_{1},\mu_{2},\nu_{1},\nu_{2}\in\mathcal{M}$, and let $\nu=\nu_{1}\boxplus\nu_{2}$.
Suppose that both $F_{\nu_{1}}^{-1}$ and $F_{\nu_{2}}^{-1}$ are
defined in a cone $\Gamma_{\alpha,\beta}$. Then there exists another
truncated cone $\Gamma_{\alpha^{\prime},\beta^{\prime}}\subset\Gamma_{\alpha,\beta}$
such that the function \[
\Phi(z)=\Phi_{(\mu_{1},\nu_{1})}(z)+\Phi_{(\mu_{2},\nu_{2})}(z),\qquad z\in\Gamma_{\alpha^{\prime},\beta^{\prime}},\]
is of the form $\Phi_{(\mu,\nu)}$ for a unique probability measure
$\mu$ on $\mathbb{R}$.
\end{prop}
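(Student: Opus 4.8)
The plan is to reconstruct a probability measure $\mu$ from a globally defined analytic function and then to check that the associated $\Phi_{(\mu,\nu)}$ equals $\Phi$ on a small truncated cone. Since $\Phi_{(\mu,\nu)}(z)=E_{\mu}(F_{\nu}^{-1}(z))$, the change of variables $w=F_{\nu}^{-1}(z)$ shows that it suffices to find $\mu\in\mathcal{M}$ whose function $E_{\mu}$ agrees with $w\mapsto\Phi\bigl(F_{\nu}(w)\bigr)$ on a neighbourhood of $\infty$ in $\mathbb{C}^{+}$; by the characterization recalled above of the functions of the form $E_{\mu}$, for this it is in turn enough to produce an analytic $E\colon\mathbb{C}^{+}\to\mathbb{C}^{-}\cup\mathbb{R}$ with $E(w)=o(|w|)$ as $w\to\infty$ nontangentially which coincides with $\Phi\circ F_{\nu}$ near $\infty$. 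Uniqueness will then be automatic: two such measures $\mu,\mu'$ satisfy $\Phi_{(\mu,\nu)}=\Phi=\Phi_{(\mu',\nu)}$ on a truncated cone, hence (composing with $F_{\nu}$) $E_{\mu}=E_{\mu'}$ on an open subset of $\mathbb{C}^{+}$, hence on all of $\mathbb{C}^{+}$ by the identity theorem, so $\mu=\mu'$.

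The function $E$ I would build from the subordination functions of free convolution. It is known that, for $\nu=\nu_{1}\boxplus\nu_{2}$, there exist analytic maps $\omega_{1},\omega_{2}\colon\mathbb{C}^{+}\to\mathbb{C}^{+}$ with $F_{\nu}=F_{\nu_{i}}\circ\omega_{i}$ and $\omega_{i}(w)=w(1+o(1))$ nontangentially, and that near $\infty$ one has $\omega_{i}=F_{\nu_{i}}^{-1}\circ F_{\nu}$. Combining the latter with $\Phi_{(\mu_{i},\nu_{i})}=E_{\mu_{i}}\circ F_{\nu_{i}}^{-1}$ gives, for $w$ near $\infty$,
\[
\Phi\bigl(F_{\nu}(w)\bigr)=\Phi_{(\mu_{1},\nu_{1})}\bigl(F_{\nu}(w)\bigr)+\Phi_{(\mu_{2},\nu_{2})}\bigl(F_{\nu}(w)\bigr)=E_{\mu_{1}}\bigl(\omega_{1}(w)\bigr)+E_{\mu_{2}}\bigl(\omega_{2}(w)\bigr),
\]
so I would define $E(w)=E_{\mu_{1}}(\omega_{1}(w))+E_{\mu_{2}}(\omega_{2}(w))$ on all of $\mathbb{C}^{+}$. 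This $E$ is analytic; it takes values in $\mathbb{C}^{-}\cup\mathbb{R}$ because each $E_{\mu_{i}}$ does and each $\omega_{i}$ maps into $\mathbb{C}^{+}$; and, since $\omega_{i}(w)=w(1+o(1))$ forces $\omega_{i}(w)\to\infty$ nontangentially while $E_{\mu_{i}}(\zeta)=o(|\zeta|)$ there, we get $E_{\mu_{i}}(\omega_{i}(w))=o(|\omega_{i}(w)|)=o(|w|)$, whence $E(w)=o(|w|)$ nontangentially. By the characterization, $E=E_{\mu}$ for a unique $\mu\in\mathcal{M}$, and $E_{\mu}=\Phi\circ F_{\nu}$ near $\infty$.

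To transfer the identity back, I would use \eqref{eq:2.1} and Lemma~2.1 (the latter to control $(F_{\nu}^{-1})'$, hence the inverse-function-theorem bookkeeping) to choose $\alpha'<\alpha$ and $\beta'>\beta$ so small that $\Gamma_{\alpha',\beta'}\subset\Gamma_{\alpha,\beta}$, that $F_{\nu}(F_{\nu}^{-1}(z))=z$ for $z\in\Gamma_{\alpha',\beta'}$, and that $F_{\nu}^{-1}(z)$ lies in the region where $E_{\mu}=\Phi\circ F_{\nu}$; then for $z\in\Gamma_{\alpha',\beta'}$,
\[
\Phi_{(\mu,\nu)}(z)=E_{\mu}\bigl(F_{\nu}^{-1}(z)\bigr)=\Phi\bigl(F_{\nu}(F_{\nu}^{-1}(z))\bigr)=\Phi(z),
\]
which, with the uniqueness above, proves the proposition. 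The only genuinely nontrivial ingredient is the existence of the globally defined subordination functions $\omega_{1},\omega_{2}$; this is where I expect the work to lie. If one prefers to avoid invoking them, the same difficulty resurfaces as follows: on $\Gamma_{\alpha,\beta}$ one has $F_{\nu}^{-1}(z)-\Phi(z)=F_{\mu_{1}}(F_{\nu_{1}}^{-1}(z))+F_{\mu_{2}}(F_{\nu_{2}}^{-1}(z))-z=:H(z)$, and the inequalities $\Im w\le\Im F_{\mu_{i}}(w)$ together with the $F^{-1}$-identity for $\boxplus$ give $\Im H(z)\ge\Im F_{\nu}^{-1}(z)>0$, so that $H$ maps $\Gamma_{\alpha,\beta}$ into $\mathbb{C}^{+}$ and behaves like the identity at $\infty$; the proposition then amounts to extending the locally defined $w\mapsto H(F_{\nu}(w))$ to a reciprocal Cauchy transform on all of $\mathbb{C}^{+}$, which is precisely the subordination theorem again. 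Everything outside this point is routine manipulation with \eqref{eq:2.1}, Lemma~2.1, and the $F$- and $E$-transform characterizations.
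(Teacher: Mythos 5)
Your proposal is correct and takes essentially the same route as the paper: both reduce the statement to showing that $\Phi\circ F_{\nu}$ is of the form $E_{\mu}$, invoke the subordination functions $\omega_{1},\omega_{2}$ for $\nu_{1}\boxplus\nu_{2}$ to define $E=E_{\mu_{1}}\circ\omega_{1}+E_{\mu_{2}}\circ\omega_{2}$ analytically on all of $\mathbb{C}^{+}$, and then verify the nontangential $o(|z|)$ behavior before transferring back via $F_{\nu}^{-1}$. The only (cosmetic) difference is in that last verification: the paper compares $E(z)$ with $E_{\mu_{1}}(z)+E_{\mu_{2}}(z)$ using the derivative estimate of Lemma~2.1, while you argue directly from $\omega_{j}(z)=z(1+o(1))$ and $E_{\mu_{j}}(\zeta)=o(|\zeta|)$ nontangentially; both are valid.
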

\begin{proof}
Note that (2.1) shows that $F_{\nu}(z)\in\Gamma_{\alpha,\beta}$ as
$z\rightarrow\infty$ nontangentially. To prove the proposition, it
suffices to show that the function $E(z)=\Phi\left(F_{\nu}(z)\right)$
is of the form $E_{\mu}(z)$ for a unique probability measure $\mu\in\mathcal{M}$,
that is, to show that the function $E(z)$ extends analytically to
$\mathbb{C}^{+}$ and $E(z)/z\rightarrow0$ as $z\rightarrow\infty$
nontangentially.

To this purpose, we appeal to a subordination result in \cite{Subordination}
(see also \cite{Biane}) for free convolution $\nu_{1}\boxplus\nu_{2}$,
namely, there exist unique analytic functions $\omega_{1},\omega_{2}:\,\mathbb{C}^{+}\rightarrow\mathbb{C}^{+}$
such that $\omega_{j}(z)=z(1+o(1))$, $j=1,2$, as $z\rightarrow\infty$
nontangentially and $F_{\nu}(z)=F_{\nu_{1}}\left(\omega_{1}(z)\right)=F_{\nu_{2}}\left(\omega_{2}(z)\right)$
for all $z\in\mathbb{C}^{+}$. Then, by (2.2), we have \[
E(z)=E_{\mu_{1}}\left(\omega_{1}(z)\right)+E_{\mu_{2}}\left(\omega_{2}(z)\right)\]
in an open subset of $\mathbb{C}^{+}$, and hence the function $E(z)$
extends analytically to the entire upper half-pane $\mathbb{C}^{+}$.

On the other hand, Lemma 2.1 shows that the derivatives $E_{\mu_{j}}^{\prime}(z)=o(1)$,
$j=1,2$, as $z\rightarrow\infty$ nontangentially. It follows that
there exists $M>\beta$ such that \[
\left|E(z)-E_{\mu_{1}}(z)-E_{\mu_{2}}(z)\right|\leq\left|\omega_{1}(z)-z\right|+\left|\omega_{2}(z)-z\right|,\]
for $z\in\Gamma_{\alpha,M}$, and hence we conclude that $E(z)/z\rightarrow0$
as $z\rightarrow\infty$ nontangentially. Thus the proof is complete.
\end{proof}
Proposition 2.2 allows us to make the following definition which will
be used throughout the rest of this paper.

\begin{defn}
Let $\mu_{1},\mu_{2},\nu_{1},\nu_{2}\in\mathcal{M}$, and let $\nu=\nu_{1}\boxplus\nu_{2}$.
The additive c-free convolution $(\mu_{1},\nu_{2})\boxplus_{\text{c}}(\mu_{2},\nu_{2})$
is the pair $(\mu,\nu)$, where $\mu$ is the unique probability measure
provided by Proposition 2.2.
\end{defn}
We will also use the somewhat abused notation \[
\mu=\mu_{1}\boxplus_{\text{c}}\mu_{2}.\]
Indeed, $\mu_{1}\boxplus_{\text{c}}\mu_{2}$ depends on $\nu_{1}$
and $\nu_{2}$ as well. We choose this shorter notation because the
asymptotic behavior of free convolution $\boxplus$ is well understood
(see \cite{CG1}, and \cite{BJadd} for a different approach), and
we would like to address convergence issues on the first component
of c-free convolution. Our second remark is that the operation $\boxplus_{\text{c}}$
is commutative and associative by Proposition 2.2, and it reduces
to the original c-free convolution introduced in \cite{BLS} in the
case of compactly supported measures.

\subsection{Weak convergence of probability measures}

If $\mu_{n}$ and $\mu$ are elements of $\mathcal{M}$, or more generally,
finite Borel measures on $\mathbb{R}$, we say that $\mu_{n}$ converges
\emph{weakly} to $\mu$ if \[
\lim_{n\rightarrow\infty}\int_{-\infty}^{\infty}f(t)\, d\mu_{n}(t)=\lim_{n\rightarrow\infty}\int_{-\infty}^{\infty}f(t)\, d\mu(t)\]
for every bounded continuous function $f$ on $\mathbb{R}$. The weak
convergence of measures requires tightness. Recall that a family $\mathcal{F}$
of finite Borel measures on $\mathbb{R}$ is \emph{tight} if \[
\lim_{y\rightarrow+\infty}\sup_{\mu\in\mathcal{F}}\mu(\{ t:\,\left|t\right|>y\})=0.\]
Any tight sequence of probability measures has a subsequence which
converges weakly to a probability measure.

We note for further reference that weak convergence of probability
measures can be translated in terms of convergence properties of the
corresponding functions $E$ and $\Phi$.

\begin{prop}
Let $\{\mu_{n}\}_{n=1}^{\infty}$ and $\{\nu_{n}\}_{n=1}^{\infty}$
be two sequences in $\mathcal{M}$.
\begin{enumerate}
\item The sequence $\mu_{n}$ converges weakly to a measure $\mu\in\mathcal{M}$
if and only if there exists a truncated cone $\Gamma$ such that the
sequence $E_{\mu_{n}}$ converges uniformly on the compact subsets
of $\Gamma$ to a function $E$, and $E_{\mu_{n}}(z)=o(\left|z\right|)$
uniformly in $n$ as $\left|z\right|\rightarrow\infty$, $z\in\Gamma$.
Moreover, we have $E=E_{\mu}$ in this situation.
\item Assume that the sequence $\nu_{n}$ converges weakly to a measure
$\nu\in\mathcal{M}$. Then the sequence $\mu_{n}$ converges weakly
to a measure $\mu\in\mathcal{M}$ if and only if there exist $\alpha,\beta>0$
such that the functions $\Phi_{(\mu_{n},\nu_{n})}$ are defined in
the cone $\Gamma_{\alpha,\beta}$ for every $n$, $\lim_{n\rightarrow\infty}\Phi_{(\mu_{n},\nu_{n})}(iy)$
exists for every $y>\beta$ and $\Phi_{(\mu_{n},\nu_{n})}(iy)=o(y)$
uniformly in $n$ as $y\rightarrow\infty$. Moreover, in this case
we have $\lim_{n\rightarrow\infty}\Phi_{(\mu_{n},\nu_{n})}(iy)=\Phi_{(\mu,\nu)}(iy)$
for every $y>\beta$.
\end{enumerate}
\end{prop}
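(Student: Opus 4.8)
The plan is to prove part (1) directly with Cauchy--transform techniques, and then to deduce part (2) from part (1) together with the composition identity $\Phi_{(\mu,\nu)}=E_{\mu}\circ F_{\nu}^{-1}$; throughout I use the standard fact that, for a sequence in $\mathcal{M}$, weak convergence to a measure in $\mathcal{M}$ is the same as tightness plus pointwise convergence of Cauchy transforms on $\mathbb{C}^{+}$. For part (1): if $\mu_{n}\to\mu$ weakly with $\mu\in\mathcal{M}$, then for each fixed $z\in\mathbb{C}^{+}$ the function $t\mapsto(z-t)^{-1}$ is bounded continuous, so $G_{\mu_{n}}(z)\to G_{\mu}(z)$, and since $|G_{\mu_{n}}(z)|\le(\Im z)^{-1}$, Vitali's theorem upgrades this to local uniform convergence on $\mathbb{C}^{+}$; hence $F_{\mu_{n}}=1/G_{\mu_{n}}\to F_{\mu}$ and $E_{\mu_{n}}=z-F_{\mu_{n}}\to E_{\mu}$ locally uniformly, which restricted to any truncated cone $\Gamma$ gives the asserted convergence with limit $E_{\mu}$. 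The uniform bound $E_{\mu_{n}}(z)=o(|z|)$ on $\Gamma$ follows from tightness of $\{\mu_{n}\}$ (automatic here): from the identity $E_{\mu}(z)=\big(\int t(z-t)^{-1}\,d\mu(t)\big)/G_{\mu}(z)$, together with $|z-t|\ge\Im z$ and the lower bound $|G_{\mu_{n}}(z)|\ge c_{\alpha}(\Im z)^{-1}\mu_{n}(\{|t|\le|z|\})$ valid for $z\in\Gamma_{\alpha}$, tightness forces the numerator to tend to $0$ and the denominator to stay of order $(\Im z)^{-1}$, uniformly in $n$, so $|E_{\mu_{n}}(z)|=o(\Im z)=o(|z|)$ uniformly. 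For the converse implication, suppose $E_{\mu_{n}}\to E$ uniformly on compacts of a truncated cone $\Gamma$ with $E_{\mu_{n}}(z)=o(|z|)$ uniformly in $n$. Along the ray $\{iy\}\subset\Gamma$ ($y$ large) one has $1-iyG_{\mu_{n}}(iy)=\frac{-E_{\mu_{n}}(iy)/(iy)}{1-E_{\mu_{n}}(iy)/(iy)}$, whose real part equals $\int\frac{t^{2}}{t^{2}+y^{2}}\,d\mu_{n}(t)$; the hypothesis makes this tend to $0$ uniformly in $n$, which is exactly tightness of $\{\mu_{n}\}$. Thus every subsequence has a further subsequence converging weakly to some $\mu\in\mathcal{M}$ whose $E_{\mu}$ agrees with $E$ on $\Gamma$ by the forward implication; since $E=E_{\mu}$ determines $\mu$ uniquely, the whole sequence converges weakly to $\mu$.

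For the forward direction of part (2), assume $\mu_{n}\to\mu$ and $\nu_{n}\to\nu$ weakly. Part (1) applied to $\{\nu_{n}\}$ gives $E_{\nu_{n}}(z)=o(|z|)$ uniformly on cones, so by Lemma 2.1 the derivative $F_{\nu_{n}}'=1+E_{\nu_{n}}'$ is uniformly close to $1$ on a suitable truncated cone; hence the $F_{\nu_{n}}$ are injective there, the left inverses $F_{\nu_{n}}^{-1}$ are all defined on one common cone $\Gamma_{\alpha,\beta}$, $F_{\nu_{n}}^{-1}(iy)=iy(1+o(1))$ uniformly in $n$, and $F_{\nu_{n}}^{-1}\to F_{\nu}^{-1}$ locally uniformly. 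Then $\Phi_{(\mu_{n},\nu_{n})}=E_{\mu_{n}}\circ F_{\nu_{n}}^{-1}$ is defined on $\Gamma_{\alpha,\beta}$; combining $E_{\mu_{n}}\to E_{\mu}$ locally uniformly with $F_{\nu_{n}}^{-1}(iy)\to F_{\nu}^{-1}(iy)$ gives $\Phi_{(\mu_{n},\nu_{n})}(iy)\to E_{\mu}(F_{\nu}^{-1}(iy))=\Phi_{(\mu,\nu)}(iy)$; and since $E_{\mu_{n}}(z)=o(|z|)$ uniformly in $n$ on every cone (tightness of $\{\mu_{n}\}$ plus part (1)) while $|F_{\nu_{n}}^{-1}(iy)|$ is comparable to $y$ uniformly in $n$, we obtain $\Phi_{(\mu_{n},\nu_{n})}(iy)=o(y)$ uniformly in $n$.

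For the converse direction of part (2), which I expect to be the main obstacle, assume $\nu_{n}\to\nu$ weakly and the stated hypotheses on the $\Phi_{(\mu_{n},\nu_{n})}$. After shrinking $\Gamma_{\alpha,\beta}$ (using the argument of the forward direction applied to $\{\nu_{n}\}$), the functions $z_{n}(y):=F_{\nu_{n}}^{-1}(iy)$ are defined for all $n$ and large $y$, with $z_{n}(y)=iy(1+o(1))$ uniformly in $n$, and $E_{\mu_{n}}(z_{n}(y))=\Phi_{(\mu_{n},\nu_{n})}(iy)=o(y)$ uniformly in $n$. The difficulty is that this controls $E_{\mu_{n}}$ only at the $n$-dependent points $z_{n}(y)$, not along a fixed ray, so the tightness criterion of part (1) is not yet applicable. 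To transfer the bound to the imaginary axis I use that $E_{\mu_{n}}$ maps $\mathbb{C}^{+}$ into $\mathbb{C}^{-}\cup\mathbb{R}$, so $-iE_{\mu_{n}}$ has nonpositive real part: applying a Borel--Carath\'{e}odory estimate to $-iE_{\mu_{n}}$ on the disc $D\big(z_{n}(y),\tfrac{1}{2}\Im z_{n}(y)\big)\subset\mathbb{C}^{+}$ and using $|iy-z_{n}(y)|=o\big(\Im z_{n}(y)\big)$, one gets $|E_{\mu_{n}}(iy)|\le(1+o(1))\,|E_{\mu_{n}}(z_{n}(y))|$, whence $E_{\mu_{n}}(iy)=o(y)$ uniformly in $n$; by the real-part computation of part (1) this yields tightness of $\{\mu_{n}\}$. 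Finally, passing to a subsequence with $\mu_{n_{k}}\to\mu'\in\mathcal{M}$ weakly, the forward direction of part (2) gives $\Phi_{(\mu_{n_{k}},\nu_{n_{k}})}(iy)\to\Phi_{(\mu',\nu)}(iy)$, so $\Phi_{(\mu',\nu)}(iy)=\lim_{n}\Phi_{(\mu_{n},\nu_{n})}(iy)$ for every $y>\beta$; since $\Phi_{(\cdot,\nu)}(iy)=E_{(\cdot)}(F_{\nu}^{-1}(iy))$ with $F_{\nu}^{-1}$ injective and open, agreement along the ray forces (identity theorem) $E_{\mu'}=E_{\mu''}$ for any two subsequential limits, so the weak limit is unique; hence $\mu_{n}\to\mu$ weakly for the whole sequence and $\lim_{n}\Phi_{(\mu_{n},\nu_{n})}(iy)=\Phi_{(\mu,\nu)}(iy)$. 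Everything outside the Borel--Carath\'{e}odory transfer step is bookkeeping around part (1) and standard facts on Cauchy transforms and tight families.
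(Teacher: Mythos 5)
Your proposal is correct, and its overall skeleton (forward direction via a common cone for the $F_{\nu_n}^{-1}$ and locally uniform convergence; converse via tightness plus uniqueness of subsequential weak limits through the identity theorem) matches the paper's. The one step where you genuinely diverge is the tightness argument in the converse of part (2). You transfer the hypothesis $E_{\mu_n}\bigl(F_{\nu_n}^{-1}(iy)\bigr)=o(y)$ from the $n$-dependent points $z_n(y)=F_{\nu_n}^{-1}(iy)$ back to the fixed ray $iy$ by a Borel--Carath\'eodory estimate applied to $-iE_{\mu_n}$ (which has nonpositive real part) on the disc $D\bigl(z_n(y),\tfrac12\Im z_n(y)\bigr)$, using $|iy-z_n(y)|=o(y)$ uniformly in $n$; this gives $E_{\mu_n}(iy)=o(y)$ uniformly, and then you invoke the tightness criterion of part (1) along the imaginary axis. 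This step checks out: with $A=0$ the Borel--Carath\'eodory bound gives $|E_{\mu_n}(iy)-E_{\mu_n}(z_n(y))|\leq\frac{2r}{R-r}\,|\Im E_{\mu_n}(z_n(y))|$ with $r=|iy-z_n(y)|=o(y)$ and $R\sim y/2$, so the factor is $o(1)$ uniformly in $n$. The paper avoids any such transfer: it works directly at the moving points $u_n(y)=F_{\nu_n}^{-1}(iy)$, showing $u_n^2G_{\mu_n}(u_n)-u_n=o(y)$ uniformly and reading off $\mu_n(\{|t|\geq y\})$ from $-\frac1y\Im\bigl(u_n^2G_{\mu_n}(u_n)-u_n\bigr)$ via the elementary lower bound $\frac{t^2}{(\Re u_n-t)^2+(\Im u_n)^2}\geq\frac18$ for $|t|\geq y$. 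Your route is more modular (it literally reduces (2) to (1), whose criterion you also prove rather than cite from Bercovici--Pata, as the paper does), at the cost of importing a growth lemma for half-plane-valued analytic functions; the paper's computation is more self-contained within the Cauchy-transform calculus. Also, in your forward direction the paper controls $E_{\mu_n}\circ F_{\nu_n}^{-1}-E_{\mu_n}\circ F_{\nu}^{-1}$ via the derivative bound of Lemma 2.1 and a normal-family argument, while you use locally uniform convergence directly; both work. One trivial slip: $F_{\nu_n}'=1-E_{\nu_n}'$, not $1+E_{\nu_n}'$ (harmless, since only $|E_{\nu_n}'|=o(1)$ is used).
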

\begin{proof}
We refer to \cite{BPstable} for the proof of (1). To prove (2), note
first that the existence of the truncated cone $\Gamma_{\alpha,\beta}$
is provided by the weak convergence of the sequence $\{\nu_{n}\}_{n=1}^{\infty}$
(see \cite[Proposition 2.3]{BPstable}). Moreover, the sequence $F_{\nu_{n}}^{-1}$
converges uniformly on the compact subsets of $\Gamma_{\alpha,\beta}$
to the function $F_{\nu}^{-1}$, and $F_{\nu_{n}}^{-1}(z)=z(1+o(1))$
uniformly in $n$ as $z\rightarrow\infty$, $z\in\Gamma_{\alpha,\beta}$.

Assume that the measures $\mu_{n}$ converge weakly to a measure $\mu$.
Then (1) and Lemma 2.1 imply that the derivatives $E_{\mu}^{\prime}(z)=o(1)$
and $E_{\mu_{n}}^{\prime}(z)=o(1)$ uniformly in $n$ as $z\rightarrow\infty$
nontangentially. It follows that there exists $M>\beta$ such that\begin{eqnarray*}
\left|\Phi_{(\mu_{n},\nu_{n})}(z)-\Phi_{(\mu,\nu)}(z)\right| & = & \left|E_{\mu_{n}}\left(F_{\nu_{n}}^{-1}(z)\right)-E_{\mu}\left(F_{\nu}^{-1}(z)\right)\right|\\
 & \leq & \left|E_{\mu_{n}}\left(F_{\nu_{n}}^{-1}(z)\right)-E_{\mu_{n}}\left(F_{\nu}^{-1}(z)\right)\right|\\
 &  & +\left|E_{\mu_{n}}\left(F_{\nu}^{-1}(z)\right)-E_{\mu}\left(F_{\nu}^{-1}(z)\right)\right|\\
 & \leq & \left|F_{\nu_{n}}^{-1}(z)-F_{\nu}^{-1}(z)\right|+\left|E_{\mu_{n}}\left(F_{\nu}^{-1}(z)\right)-E_{\mu}\left(F_{\nu}^{-1}(z)\right)\right|\end{eqnarray*}
for every $n\in\mathbb{N}$ and $z\in\Gamma_{\alpha,M}$. Hence (1)
implies that $\Phi_{(\mu_{n},\nu_{n})}(z)=o(\left|z\right|)$ uniformly
in $n$ as $z\rightarrow\infty$, $z\in\Gamma_{\alpha,\beta}$. The
family $\{\Phi_{(\mu_{n},\nu_{n})}\}_{n=1}^{\infty}$ is normal, and
hence it has subsequences which converge uniformly on the compact
subsets of $\Gamma_{\alpha,\beta}$. Moreover, the above estimate
and (1) actually imply that the limit of such a subsequence must be
the function $\Phi_{(\mu,\nu)}$. Therefore we conclude that the entire
sequence $\{\Phi_{(\mu_{n},\nu_{n})}\}_{n=1}^{\infty}$ converges
uniformly on the compact subsets of $\Gamma_{\alpha,\beta}$ to the
function $\Phi_{(\mu,\nu)}$. In particular, these results hold for
$z=iy$, $y>\beta$.

Conversely, let us assume that $\lim_{n\rightarrow\infty}\Phi_{(\mu_{n},\nu_{n})}(iy)$
exists for every $y>\beta$ and $\Phi_{(\mu_{n},\nu_{n})}(iy)=o(y)$
uniformly in $n$ as $y\rightarrow\infty$. We first show that the
sequence $\{\mu_{n}\}_{n=1}^{\infty}$ is tight. Let us define $u_{n}=u_{n}(y)=F_{\nu_{n}}^{-1}(iy)=iy+\phi_{\nu_{n}}(iy)$
for $y>\beta$, and also observe that $\phi_{\nu_{n}}(iy)=o(y)$ uniformly
in $n$ as $y\rightarrow\infty$ by the assumption on the weak convergence
of $\{\nu_{n}\}_{n=1}^{\infty}$. Then we have \[
u_{n}-F_{\mu_{n}}(u_{n})=E_{\mu_{n}}(u_{n})=\Phi_{(\mu_{n},\nu_{n})}(iy)=o(y)\]
uniformly in $n$ as $y\rightarrow\infty$. Moreover, note that \[
\left|G_{\mu_{n}}(u_{n}(y))\right|\leq\frac{1}{\Im u_{n}}=\frac{1}{y+o(y)}\]
uniformly in $n$ as $y\rightarrow\infty$. Hence, we conclude that
$u_{n}^{2}G_{\mu_{n}}(u_{n})-u_{n}=o(y)$ uniformly in $n$ as $y\rightarrow\infty$.
On the other hand, since $u_{n}=iy+o(y)$ uniformly in $n$ as $y\rightarrow\infty$,
there exists $M>\beta$ such that \[
\frac{t^{2}}{(\Re u_{n}(y)-t)^{2}+(\Im u_{n}(y))^{2}}\geq\frac{1}{8},\qquad t\in\mathbb{R},\:\left|t\right|\geq y>M,\]
for every $n$. Finally, putting everything together, we have\begin{eqnarray*}
-\frac{1}{y}\Im\left(u_{n}^{2}G_{\mu_{n}}(u_{n})-u_{n}\right) & = & \frac{\Im u_{n}}{y}\int_{-\infty}^{\infty}\frac{t^{2}}{(\Re u_{n}-t)^{2}+(\Im u_{n})^{2}}\, d\mu_{n}(t)\\
 & \geq & \frac{\Im u_{n}}{y}\int_{\left|t\right|\geq y}\frac{1}{8}\, d\mu_{n}(t)=\frac{\Im u_{n}}{8y}\,\mu_{n}(\{ t:\,\left|t\right|\geq y\}),\end{eqnarray*}
for every $n$ and $y>M$, which implies that $\{\mu_{n}\}_{n=1}^{\infty}$
is tight. If $\mu\in\mathcal{M}$ is a weak cluster point of $\{\mu_{n}\}_{n=1}^{\infty}$,
then the first part of the proof shows that the function $\Phi_{(\mu,\nu)}$
is uniquely determined and hence so is the measure $\mu$. Therefore
the sequence $\mu_{n}$ converges weakly to the measure $\mu$.
\end{proof}
Note that, in case $\nu_{n}=\delta_{0}$, Proposition 2.4 gives the
equivalence between the weak convergence of $\{\mu_{n}\}_{n=1}^{\infty}$
and convergence properties of $\{ E_{\mu_{n}}(iy)\}_{n=1}^{\infty}$.

\subsection{Infinite divisibility }

A pair of probability measures $(\mu,\nu)$ is said to be \emph{$\boxplus_{\text{c}}$-infinitely
divisible} if, for every $n\in\mathbb{N}$, there exist measures $\mu_{n},\nu_{n}\in\mathcal{M}$
such that\[
(\mu,\nu)=\underbrace{(\mu_{n},\nu_{n})\boxplus_{\text{c}}(\mu_{n},\nu_{n})\boxplus_{\text{c}}\cdots\boxplus_{\text{c}}(\mu_{n},\nu_{n})}_{n\,\text{times}},\]
in other words, we have \[
\mu=\underbrace{\mu_{n}\boxplus_{\text{c}}\mu_{n}\boxplus_{\text{c}}\cdots\boxplus_{\text{c}}\mu_{n}}_{n\,\text{times}}\qquad\text{and}\qquad\nu=\underbrace{\nu_{n}\boxplus\nu_{n}\boxplus\cdots\boxplus\nu_{n}}_{n\,\text{times}}.\]
The notion of infinite divisibility related to other convolutions
is defined analogously.

The L\'{e}vy-Hin\v{c}in formula (see \cite{Billingsley}) characterizes
the infinite divisibility relative to classical convolution $*$ of
a probability measure in terms of its Fourier transform. Namely, a
measure $\nu\in\mathcal{M}$ is $*$-infinitely divisible if and only
if there exist $\gamma\in\mathbb{R}$ and a finite positive Borel
measure $\sigma$ on $\mathbb{R}$ such that the Fourier transform
$\widehat{\nu}$ of the measure $\nu$ is given by\begin{equation}
\widehat{\nu}(t)=\exp\left[i\gamma t+\int_{-\infty}^{\infty}\left(e^{itx}-1-\frac{itx}{1+x^{2}}\right)\frac{1+x^{2}}{x^{2}}\, d\sigma(x)\right],\qquad t\in\mathbb{R}.\label{eq:2.3}\end{equation}

The free analogue of L\'{e}vy-Hin\v{c}in formula for a $\boxplus$-infinitely
divisible probability measure was proved in \cite{Vadd,BVunbdd}.
A measure $\nu\in\mathcal{M}$ is $\boxplus$-infinitely divisible
if and only if there exist $\gamma\in\mathbb{R}$ and a finite positive
Borel measure $\sigma$ on $\mathbb{R}$ such that \begin{equation}
F_{\nu}^{-1}(z)=\gamma+z+\int_{-\infty}^{\infty}\frac{1+tz}{z-t}\, d\sigma(t),\qquad z\in\mathbb{C}^{+}.\label{eq:2.4}\end{equation}
In other words, the function $F_{\nu}^{-1}$ can be extended analytically
to $\mathbb{C}^{+}$ if the measure $\nu$ is $\boxplus$-infinitely
divisible.

Every measure $\nu\in\mathcal{M}$ is $\uplus$-infinitely divisible
\cite{SW}. The reason for this is that every analytic self-mapping
of $\mathbb{C}^{+}$ has a Nevanlinna integral representation \cite{Achieser}.
In particular, the function $E_{\nu}$ can be written as \begin{equation}
E_{\nu}(z)=\gamma+\int_{-\infty}^{\infty}\frac{1+tz}{z-t}\, d\sigma(t),\qquad z\in\mathbb{C}^{+},\label{eq:2.5}\end{equation}
where $\gamma\in\mathbb{R}$ and $\sigma$ is a finite positive Borel
measure on $\mathbb{R}$.

In the sequel, we will use the notations $\nu_{*}^{\gamma,\sigma}$,
$\nu_{\boxplus}^{\gamma,\sigma}$ and $\nu_{\uplus}^{\gamma,\sigma}$
to denote respectively the $*$-, $\boxplus$-, and $\uplus$-infinitely
divisible measures that are uniquely determined by $\gamma$ and $\sigma$
via the formulas (2.3), (2.4) and (2.5).

\section{Proof of the Main Result}

Let $\{ k_{n}\}_{n=1}^{\infty}$ be a sequence of positive integers,
and let $\{ c_{n}\}_{n=1}^{\infty}$ and $\{ c_{n}^{\prime}\}_{n=1}^{\infty}$
be two sequences in $\mathbb{R}$. Consider two \emph{infinitesimal}
triangular arrays $\{\mu_{nk}:\, n\in\mathbb{N},1\leq k\leq k_{n}\}$
and $\{\nu_{nk}:\, n\in\mathbb{N},1\leq k\leq k_{n}\}$ in $\mathcal{M}$.
Here the infinitesimality of the array $\{\mu_{nk}\}_{n,k}$ means
that \[
\lim_{n\rightarrow\infty}\max_{1\leq k\leq k_{n}}\mu_{nk}(\{ t\in\mathbb{R}:\,\left|t\right|\geq\varepsilon\})=0,\]
for every $\varepsilon>0$. The goal of this section is to study the
asymptotic behavior of the sequence $\{(\mu_{n},\nu_{n})\}_{n=1}^{\infty}$,
where \[
(\mu_{n},\nu_{n})=(\delta_{c_{n}},\delta_{c_{n}^{\prime}})\boxplus_{\text{c}}(\mu_{n1},\nu_{n1})\boxplus_{\text{c}}(\mu_{n2},\nu_{n2})\boxplus_{\text{c}}\cdots\boxplus_{\text{c}}(\mu_{nk_{n}},\nu_{nk_{n}}),\]
and $\delta_{c}$ denotes the Dirac point mass at $c\in\mathbb{R}$.

To this purpose, we introduce the measures $\mu_{nk}^{\circ}$ by
setting \[
d\mu_{nk}^{\circ}(t)=d\mu_{nk}(t+a_{nk}),\]
where the numbers $a_{nk}\in[-1,1]$ are given by \begin{equation}
a_{nk}=\int_{\left|t\right|<1}t\, d\mu_{nk}(t).\label{eq:3.1}\end{equation}
Note that the array $\{\mu_{nk}^{\circ}\}_{n,k}$ is infinitesimal
and $\lim_{n\rightarrow\infty}\max_{1\leq k\leq k_{n}}\left|a_{nk}\right|=0$.

We also associate each measure $\mu_{nk}^{\circ}$ an analytic function
\[
f_{nk}(z)=\int_{-\infty}^{\infty}\frac{tz}{z-t}\, d\mu_{nk}^{\circ}(t),\qquad z\in\mathbb{C}^{+}.\]
Observe that $\Im f_{nk}(z)<0$ for all $z\in\mathbb{C}^{+}$ unless
the measure $\mu_{nk}^{\circ}=\delta_{0}$, and that $f_{nk}(z)=o(\left|z\right|)$
as $z\rightarrow\infty$ nontangentially.

We will require the following result.

\begin{prop}
Let $\Gamma_{\alpha,\beta}$ be a truncated cone, and let $\{ c_{n}\}_{n=1}^{\infty}$
be a sequence in $\mathbb{R}$. Suppose that the arrays $\{\mu_{nk}\}_{n,k}$
and $\{\nu_{nk}\}_{n,k}$ in $\mathcal{M}$ are infinitesimal, and
that the centered measures $\mu_{nk}^{\circ}$ are defined as above.
Then
\begin{enumerate}
\item $E_{\mu_{nk}^{\circ}}(z)=f_{nk}(z+a_{nk})(1+v_{nk}(z))$ for sufficiently
large $n$, where the sequence $v_{n}(z)=\max_{1\leq k\leq k_{n}}\left|v_{nk}(z)\right|$
has the properties that $\lim_{n\rightarrow\infty}v_{n}(z)=0$ for
all $z\in\Gamma_{\alpha,\beta}$ and $v_{n}(z)=o(1)$ uniformly in
$n$ as $\left|z\right|\rightarrow\infty$, $z\in\Gamma_{\alpha,\beta}$.
\item For every $n$, $k$ and $z,\, w\in\Gamma_{\alpha,\beta}$, we have
\[
\left|f_{nk}(w)-f_{nk}(z)\right|\leq\left|f_{nk}(z)\right|\frac{\left|z-w\right|}{\Im z}\left(1+\sqrt{1+\alpha^{2}}\left|\frac{z}{w}-1\right|\right).\]

\item For every $y>\beta$, the sequence $\{ c_{n}+\sum_{k=1}^{k_{n}}E_{\mu_{nk}}(iy)\}_{n=1}^{\infty}$
converges if and only if the sequence $\{ c_{n}+\sum_{k=1}^{k_{n}}\left[a_{nk}+f_{nk}(iy)\right]\}_{n=1}^{\infty}$
converges. Moreover, the two sequences have the same limit.
\item If \[
L=\sup_{n\geq1}\sum_{k=1}^{k_{n}}\int_{-\infty}^{\infty}\frac{t^{2}}{1+t^{2}}\, d\mu_{nk}^{\circ}(t)<+\infty,\]
then $c_{n}+\sum_{k=1}^{k_{n}}E_{\mu_{nk}}(iy)=o(y)$ uniformly in
$n$ as $y\rightarrow\infty$ if and only if $c_{n}+\sum_{k=1}^{k_{n}}\left[a_{nk}+f_{nk}(iy)\right]=o(y)$
uniformly in $n$ as $y\rightarrow\infty$.
\end{enumerate}
\end{prop}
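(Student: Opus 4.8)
The plan is to reduce everything to the elementary identities
\[
z^{2}G_{\mu_{nk}^{\circ}}(z)=z+f_{nk}(z),\qquad E_{\mu_{nk}^{\circ}}(z)=\frac{f_{nk}(z)}{1+f_{nk}(z)/z}\qquad(z\in\mathbb{C}^{+}),
\]
and to the shift relation $E_{\mu_{nk}}(z)=a_{nk}+E_{\mu_{nk}^{\circ}}(z-a_{nk})$, all of which follow at once from the definitions of $G$, $F$, $E$ and $f_{nk}$. I also record the sign fact that for real $t$ one has $\Im\frac{tz}{z-t}=-t^{2}\,\Im z/|z-t|^{2}\le 0$, so that $\Im f_{nk}(z)\le 0$ and $\int\frac{t^{2}}{|z-t|^{2}}\,d\mu_{nk}^{\circ}(t)=|\Im f_{nk}(z)|/\Im z\le |f_{nk}(z)|/\Im z$. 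I would prove (2) first, since it is self-contained and used later: from the two expressions for $tz/(z-t)$ one gets $f_{nk}(w)-f_{nk}(z)=(z-w)\int t^{2}[(w-t)(z-t)]^{-1}\,d\mu_{nk}^{\circ}$, and the identity $\frac{t^{2}}{(w-t)(z-t)}=\frac{t^{2}}{(z-t)^{2}}+(z-w)\frac{t^{2}}{(w-t)(z-t)^{2}}$. Bounding the first summand by the displayed inequality above, and the second by $|w-t|^{-1}\le(\Im w)^{-1}$ together with $|z-w|/\Im w=(|w|/\Im w)|z/w-1|\le\sqrt{1+\alpha^{2}}\,|z/w-1|$, yields exactly the stated estimate.

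For (1), the identities above let me define $v_{nk}$ by $1+v_{nk}(z)=f_{nk}(z)/[f_{nk}(z+a_{nk})(1+f_{nk}(z)/z)]$ (with $v_{nk}\equiv 0$ in the trivial case $\mu_{nk}^{\circ}=\delta_{0}$, where $f_{nk}\equiv 0$). It then suffices to show, in the two required senses, that $f_{nk}(z)/z\to 0$ and $f_{nk}(z)/f_{nk}(z+a_{nk})\to 1$. For the first I would write $f_{nk}(z)/z=\int t(z-t)^{-1}\,d\mu_{nk}^{\circ}$, split the integral at $|t|=\delta$, and use $|t/(z-t)|\le 1+\sqrt{1+\alpha^{2}}$ on the cone; the tail is controlled because an infinitesimal array is uniformly tight, i.e.\ $\sup_{n,k}\mu_{nk}^{\circ}(\{|t|>R\})\to 0$ as $R\to\infty$ (for each $n$ there are only finitely many $k$, and the finitely many exceptional $n$ are covered by ordinary tightness). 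For the second I would apply (2) with $w=z+a_{nk}$, using $|z/w-1|\le|a_{nk}|/\Im z$ and $\max_{k}|a_{nk}|\to 0$. Both factors then tend to $1$ with the required uniformity, giving the asserted properties of $v_{n}=\max_{k}|v_{nk}|$. (A slight shrinking of the cone handles the exceptional case that $z+a_{nk}$ leaves $\Gamma_{\alpha,\beta}$.)

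For (3) and (4), by (1) and the shift relation the difference of the two partial sums equals $R_{n}(iy)=\sum_{k}[E_{\mu_{nk}^{\circ}}(iy-a_{nk})-f_{nk}(iy)]$. Writing $E_{\mu_{nk}^{\circ}}(w)=f_{nk}(w)-f_{nk}(w)^{2}/(w+f_{nk}(w))$ and using (2), each summand is bounded for $n$ large by $\frac{C}{y}|f_{nk}(iy)|\big(|a_{nk}|+|f_{nk}(iy)|\big)$. The crucial estimate is $|f_{nk}(iy)|\le C(y)\int\frac{t^{2}}{1+t^{2}}\,d\mu_{nk}^{\circ}$ with $C(y)=O(y)$ as $y\to\infty$: the imaginary part of $f_{nk}(iy)$ is immediately of this size, while the real part requires $\big|\int_{|t|<1}t\,d\mu_{nk}^{\circ}\big|\le C\int\frac{t^{2}}{1+t^{2}}\,d\mu_{nk}^{\circ}$ for $n$ large, which follows from the definition of $a_{nk}$ and infinitesimality and is precisely where working with the centered measures $\mu_{nk}^{\circ}$ (rather than $\mu_{nk}$) is indispensable. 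One also has the reverse comparisons $\int\frac{t^{2}}{1+t^{2}}\,d\mu_{nk}^{\circ}\le c(y)^{-1}|\Im f_{nk}(iy)|$ and, by the same estimates, $|\Im E_{\mu_{nk}}(iy)|\ge c'(y)\int\frac{t^{2}}{1+t^{2}}\,d\mu_{nk}^{\circ}$ for $n$ large. To prove (3): since $\Im f_{nk}(iy)\le 0$ and $\Im E_{\mu_{nk}}(iy)\le 0$, convergence of either sequence forces $\sup_{n}\sum_{k}\int\frac{t^{2}}{1+t^{2}}\,d\mu_{nk}^{\circ}<\infty$, hence $\sup_{n}\sum_{k}|f_{nk}(iy)|<\infty$; then $|R_{n}(iy)|\le\frac{C}{y}\big(\max_{k}|a_{nk}|+\max_{k}|f_{nk}(iy)|\big)\sum_{k}|f_{nk}(iy)|\to 0$, so the two sequences converge or diverge together and, when they converge, to the same limit.

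To prove (4), with $L<\infty$ given, I would treat the finitely many small $n$ separately (there each $E_{\mu_{nk}}(iy)-a_{nk}-f_{nk}(iy)$ is $o(y)$, hence so is any finite sum of such terms, uniformly over this finite set), and for $n$ large substitute $\sum_{k}|f_{nk}(iy)|\le C(y)L$ and $\sum_{k}|f_{nk}(iy)|^{2}\le C(y)\big(\max_{k}\int\frac{t^{2}}{1+t^{2}}\,d\mu_{nk}^{\circ}\big)L$ into the bound for $R_{n}(iy)$; since the last factor $\to 0$ as $n\to\infty$, this gives $R_{n}(iy)/y\to 0$ as $y\to\infty$ uniformly in $n$, which is the asserted equivalence. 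I expect the main obstacle to be exactly the sharp \emph{linear} estimate $|f_{nk}(iy)|\le C(y)\int\frac{t^{2}}{1+t^{2}}\,d\mu_{nk}^{\circ}$ (equivalently the first-moment bound on $\mu_{nk}^{\circ}$ behind it): the naive bound only yields the square root of the right-hand side, which need not be summable in $k$, so that without the gain coming from the centering the replacement error $R_{n}$ cannot be shown to vanish.
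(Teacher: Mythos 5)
Your proposal is correct, but it is organized quite differently from the paper: the paper proves only part (2) of this proposition and refers to \cite{JCBoolean} for (1), (3) and (4), whereas you give a self-contained argument for all four parts. For (2) your computation is essentially the paper's: the paper bounds $\left|\frac{z-t}{w-t}\right|\leq 1+\sqrt{1+\alpha^{2}}\left|\frac{z}{w}-1\right|$ inside the integral, while you use the resolvent identity $\frac{t^{2}}{(w-t)(z-t)}=\frac{t^{2}}{(z-t)^{2}}+(z-w)\frac{t^{2}}{(w-t)(z-t)^{2}}$ together with $\int t^{2}\left|z-t\right|^{-2}d\mu\leq\left|f(z)\right|/\Im z$ and $\left|w\right|/\Im w\leq\sqrt{1+\alpha^{2}}$; the two are equivalent. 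For (1), (3), (4) your route differs from the cited one: instead of the comparability estimates $\left|\Re f_{nk}(iy)\right|\leq(3+6y)\left|\Im f_{nk}(iy)\right|$ and $\left|\Re f_{nk}(iy)\right|\leq2\left|\Im f_{nk}(iy)\right|+\left|b_{nk}(y)\right|$ imported from \cite{BJadd} and fed into a summation lemma of the type of Lemma 3.3, you rest everything on the single linear bound $\left|\int_{\left|t\right|<1}t\,d\mu_{nk}^{\circ}(t)\right|\leq C\int\frac{t^{2}}{1+t^{2}}\,d\mu_{nk}^{\circ}(t)$ for large $n$, and this is indeed valid: since $a_{nk}$ is the truncated mean, $\int_{\left|t\right|<1}t\,d\mu_{nk}^{\circ}$ equals $a_{nk}\mu_{nk}(\{\left|s\right|\geq1\})$ plus boundary contributions supported on $\{\left|s\right|\geq1-\left|a_{nk}\right|\}$, so it is $O\bigl(\mu_{nk}^{\circ}(\{\left|t\right|\geq1/2\})\bigr)=O\bigl(\int\frac{t^{2}}{1+t^{2}}d\mu_{nk}^{\circ}\bigr)$ once $\max_{k}\left|a_{nk}\right|\leq1/4$ --- exactly the gain from centering that you flag as the crux. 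Granting that, your chain (the identities $z^{2}G_{\mu_{nk}^{\circ}}(z)=z+f_{nk}(z)$, $E_{\mu_{nk}^{\circ}}(w)=f_{nk}(w)-f_{nk}(w)^{2}/(w+f_{nk}(w))$, uniform tightness of an infinitesimal array, and estimate (2) at $w=z+a_{nk}$) correctly yields (1), (3) and (4), including the step in (3) where convergence of either sequence forces $\sup_{n}\sum_{k}\int\frac{t^{2}}{1+t^{2}}d\mu_{nk}^{\circ}<+\infty$ via the sign of the imaginary parts. Two cosmetic repairs: in (1) no shrinking of the cone is needed, since for $z\in\Gamma_{\alpha,\beta}$ and $\left|a_{nk}\right|\leq1$ one has $\left|z+a_{nk}\right|/\Im z\leq\sqrt{1+\alpha^{2}}+1/\beta$, so the proof of (2) applies on all of $\Gamma_{\alpha,\beta}$ with a slightly larger constant (the statement is asserted on $\Gamma_{\alpha,\beta}$ itself); and in (4) the bound should be $\sum_{k}\left|f_{nk}(iy)\right|^{2}\leq C(y)^{2}L\max_{k}\int\frac{t^{2}}{1+t^{2}}d\mu_{nk}^{\circ}$ (you dropped a factor of $C(y)$), which after division by $y$ still gives a term of size $O\bigl(L\max_{k}\int\frac{t^{2}}{1+t^{2}}d\mu_{nk}^{\circ}\bigr)$, small for large $n$, with the finitely many remaining $n$ handled separately as you indicate.
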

\begin{proof}
(1), (3) and (4) are proved in \cite{JCBoolean}. To prove (2), let
us consider the analytic function\[
f_{\mu}(z)=\int_{-\infty}^{\infty}\frac{tz}{z-t}\, d\mu(t),\qquad z\in\mathbb{C}^{+},\]
for a measure $\mu\in\mathcal{M}$. For $z,w\in\mathbb{C}^{+}$, we
have\[
\left|f_{\mu}(z)-f_{\mu}(w)\right|\leq\left|z-w\right|\int_{-\infty}^{\infty}\frac{t^{2}}{\left|w-t\right|\left|z-t\right|}\, d\mu(t)\]
and\[
\Im z\,\int_{-\infty}^{\infty}\frac{t^{2}}{\left|z-t\right|^{2}}\, d\mu(t)=\left|\Im f_{\mu}(z)\right|\leq\left|f_{\mu}(z)\right|.\]
In addition, we have \begin{eqnarray*}
\left|\frac{z-t}{w-t}\right| & \leq & \frac{\left|z-w\right|+\left|w-t\right|}{\left|w-t\right|}\\
 & = & 1+\left|\frac{w}{w-t}\right|\left|\frac{z}{w}-1\right|\\
 & \leq & 1+\sqrt{1+\alpha^{2}}\left|\frac{z}{w}-1\right|\end{eqnarray*}
for every $t\in\mathbb{R}$ and $z,w\in\Gamma_{\alpha}$. Therefore
(2) follows from these considerations.
\end{proof}
It was first observed in \cite{BPHincin} that for any given truncated
cone $\Gamma_{\alpha,\beta}$, the function $F_{\mu}^{-1}$ is defined
in $\Gamma_{\alpha,\beta}$ as long as the measure $\mu$ concentrates
near the origin. More precisely, for given $\alpha,\beta>0$, there
exists $\varepsilon>0$ with the property that if $\mu\in\mathcal{M}$
is such that $\mu(\{ t\in\mathbb{R}:\,\left|t\right|\geq\varepsilon\})<\varepsilon$,
then the function $F_{\mu}^{-1}$ is defined in $\Gamma_{\alpha,\beta}$.

\begin{lem}
Let $\Gamma_{\alpha,\beta}$ be a truncated cone, and let $\{\mu_{nk}\}_{n,k}$
and $\{\nu_{nk}\}_{n,k}$ be two infinitesimal arrays in $\mathcal{M}$.
Then, for sufficiently large $n$, we have \[
\Phi_{(\mu_{nk},\nu_{nk})}(z)-a_{nk}=f_{nk}(z)(1+u_{nk}(z)),\qquad z\in\Gamma_{\alpha,\beta},\:1\leq k\leq k_{n},\]
where the sequence \[
u_{n}(z)=\max_{1\leq k\leq k_{n}}\left|u_{nk}(z)\right|\]
has the properties that $\lim_{n\rightarrow\infty}u_{n}(z)=0$ for
all $z\in\Gamma_{\alpha,\beta}$, and that $u_{n}(z)=o(1)$ uniformly
in $n$ as $\left|z\right|\rightarrow\infty$, $z\in\Gamma_{\alpha,\beta}$.
\end{lem}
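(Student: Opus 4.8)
The plan is to reduce the lemma to Proposition 3.1, parts (1) and (2), once the effect of the recentering $\mu_{nk}\mapsto\mu_{nk}^{\circ}$ on the relevant transforms has been recorded. Since $d\mu_{nk}^{\circ}(t)=d\mu_{nk}(t+a_{nk})$, the measure $\mu_{nk}^{\circ}$ is the push-forward of $\mu_{nk}$ under $t\mapsto t-a_{nk}$, so $G_{\mu_{nk}^{\circ}}(z)=G_{\mu_{nk}}(z+a_{nk})$ and hence
\[
E_{\mu_{nk}}(w)=a_{nk}+E_{\mu_{nk}^{\circ}}(w-a_{nk}),\qquad w\in\mathbb{C}^{+};
\]
in particular $\Phi_{(\mu_{nk},\nu_{nk})}(z)-a_{nk}=E_{\mu_{nk}^{\circ}}\bigl(F_{\nu_{nk}}^{-1}(z)-a_{nk}\bigr)$. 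Next, by the criterion recalled just before the lemma, the infinitesimality of $\{\nu_{nk}\}$ guarantees that, for all large $n$, every $F_{\nu_{nk}}^{-1}$ is defined on $\Gamma_{\alpha,\beta}$ (so that $\Phi_{(\mu_{nk},\nu_{nk})}$ is too), and, more quantitatively, that $\phi_{\nu_{nk}}(z):=F_{\nu_{nk}}^{-1}(z)-z$ is small relative to $|z|$ uniformly on the cone: there is a sequence $\eta_{n}\to0$ with $\max_{k}|\phi_{\nu_{nk}}(z)|\le\eta_{n}(1+|z|)$ for all $z\in\Gamma_{\alpha,\beta}$. Since moreover $\max_{k}|a_{nk}|\to0$ and $(1+|z|)/|z|$ is bounded on $\Gamma_{\alpha,\beta}$, we may fix a slightly larger truncated cone $\Gamma_{\alpha',\beta'}\supseteq\Gamma_{\alpha,\beta}$ such that, for $n$ large, both $F_{\nu_{nk}}^{-1}(z)$ and $F_{\nu_{nk}}^{-1}(z)-a_{nk}$ lie in $\Gamma_{\alpha',\beta'}$ whenever $z\in\Gamma_{\alpha,\beta}$.

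With this in hand, Proposition 3.1(1) (applied on $\Gamma_{\alpha',\beta'}$, at the point $F_{\nu_{nk}}^{-1}(z)-a_{nk}$, whose shift by $a_{nk}$ is $F_{\nu_{nk}}^{-1}(z)$) gives
\[
\Phi_{(\mu_{nk},\nu_{nk})}(z)-a_{nk}=f_{nk}\bigl(F_{\nu_{nk}}^{-1}(z)\bigr)\bigl(1+v_{nk}(F_{\nu_{nk}}^{-1}(z)-a_{nk})\bigr),
\]
while Proposition 3.1(2), applied on $\Gamma_{\alpha',\beta'}$ with $w=F_{\nu_{nk}}^{-1}(z)$, lets us separate out $f_{nk}(z)$: it yields $f_{nk}(F_{\nu_{nk}}^{-1}(z))=f_{nk}(z)(1+\varepsilon_{nk}(z))$ with
\[
|\varepsilon_{nk}(z)|\le\frac{|\phi_{\nu_{nk}}(z)|}{\Im z}\Bigl(1+\sqrt{1+(\alpha')^{2}}\,\frac{|\phi_{\nu_{nk}}(z)|}{|F_{\nu_{nk}}^{-1}(z)|}\Bigr).
\]
Setting $1+u_{nk}(z)=(1+\varepsilon_{nk}(z))\bigl(1+v_{nk}(F_{\nu_{nk}}^{-1}(z)-a_{nk})\bigr)$ then produces the identity asserted in the lemma, and everything reduces to verifying the two asymptotic properties of $u_{n}(z)=\max_{k}|u_{nk}(z)|$.

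For these I would argue as follows. On $\Gamma_{\alpha,\beta}$ one has $\Im z>\beta$, $\Im z>|z|/\sqrt{1+\alpha^{2}}$, and, for $n$ large, $|F_{\nu_{nk}}^{-1}(z)|\ge\Im z/2$; feeding these together with $\max_{k}|\phi_{\nu_{nk}}(z)|\le\eta_{n}(1+|z|)$ into the above bound gives $\max_{k}\sup_{z\in\Gamma_{\alpha,\beta}}|\varepsilon_{nk}(z)|\le C\eta_{n}\to0$, where $C$ depends only on the cone; this settles the $\varepsilon_{nk}$ factor, both pointwise in $z$ and uniformly as $|z|\to\infty$. For the other factor, put $w_{nk}=F_{\nu_{nk}}^{-1}(z)-a_{nk}$. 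As $|z|\to\infty$ we have $|w_{nk}|\ge|z|/4$ uniformly in $n$ and $k$, so the ``uniform $o(1)$ at infinity'' clause of Proposition 3.1(1) forces $\max_{k}|v_{nk}(w_{nk})|=o(1)$ uniformly in $n$ as $|z|\to\infty$; and for fixed $z$ the points $w_{nk}$ tend to $z$ uniformly in $k$, hence eventually lie in a fixed compact subset of $\Gamma_{\alpha',\beta'}$ on which $v_{n}\to0$ uniformly (a routine normal-family consequence of Proposition 3.1(1) and (2)), so that $\max_{k}|v_{nk}(w_{nk})|\to0$. Multiplying the two factors gives both required properties of $u_{n}$.

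The step I expect to be the main obstacle is the uniform control of the ``free correction'' $\phi_{\nu_{nk}}(z)=F_{\nu_{nk}}^{-1}(z)-z$ over the whole truncated cone: one cannot expect $\phi_{\nu_{nk}}$ to be small in absolute value there, only small relative to $|z|$, and it is precisely this relative smallness — a consequence of the infinitesimality of $\{\nu_{nk}\}$, that is, of each $\nu_{nk}$ concentrating near the origin — that makes the factor $|\phi_{\nu_{nk}}(z)|/\Im z$ in Proposition 3.1(2) tend to $0$. A secondary technical point is substituting the moving argument $F_{\nu_{nk}}^{-1}(z)-a_{nk}$ into $v_{nk}$; this is harmless because the estimate in Proposition 3.1(2) carries the factor $|f_{nk}(z)|$ on its right-hand side, so that perturbing the argument of $f_{nk}$ turns into a multiplicative error $(1+\varepsilon_{nk})$ which merges cleanly with the $(1+v_{nk})$ coming from Proposition 3.1(1).
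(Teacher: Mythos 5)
Your argument is correct and is essentially the paper's own proof: you use the same recentering identity $\Phi_{(\mu_{nk},\nu_{nk})}(z)-a_{nk}=E_{\mu_{nk}^{\circ}}\bigl(F_{\nu_{nk}}^{-1}(z)-a_{nk}\bigr)$, the same uniform expansion $F_{\nu_{nk}}^{-1}(z)=z(1+o(1))$ coming from infinitesimality and the remark before the lemma, and then combine parts (1) and (2) of Proposition 3.1, exactly as the paper does, only with the cone-enlargement and error-bookkeeping details written out explicitly.
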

\begin{proof}
Introduce measures\[
d\nu_{nk}^{\circ}(t)=d\nu_{nk}(t+a_{nk}),\]
where the real numbers $a_{nk}$ are defined as in (3.1). The infinitesimality
of the arrays $\{\nu_{nk}\}_{n,k}$ and $\{\nu_{nk}^{\circ}\}_{n,k}$
and the remark we make prior to the current lemma imply, as $n$ tends
to infinity, that the functions $F_{\nu_{nk}}^{-1}$ and $F_{\nu_{nk}^{\circ}}^{-1}$
are defined in the cone $\Gamma_{\alpha,\beta}$ and moreover $F_{\nu_{nk}}^{-1}(z)=z(1+o(1))$
uniformly in $k$ and $z\in\Gamma_{\alpha,\beta}$.

The desired result now follows from (1) and (2) of Proposition 3.1,
and from the following observation:\[
\Phi_{(\mu_{nk},\nu_{nk})}(z)-a_{nk}=\Phi_{(\mu_{nk}^{\circ},\nu_{nk}^{\circ})}(z)=E_{\mu_{nk}^{\circ}}\left(F_{\nu_{nk}^{\circ}}^{-1}(z)\right)=E_{\mu_{nk}^{\circ}}\left(F_{\nu_{nk}}^{-1}(z)-a_{nk}\right).\]

\end{proof}
As shown in \cite{BJadd}, the real and the imaginary parts of the
function $f_{nk}$ become comparable when $n$ is large. More precisely,
we have \[
\left|\Re f_{nk}(iy)\right|\leq(3+6y)\left|\Im f_{nk}(iy)\right|,\qquad1\leq k\leq k_{n},\, y\geq1,\]
and \[
\left|\Re f_{nk}(iy)\right|\leq2\left|\Im f_{nk}(iy)\right|+\left|b_{nk}(y)\right|,\qquad1\leq k\leq k_{n},\, y\geq1,\]
where $n$ is sufficiently large and the real-valued function $b_{nk}(y)$
is defined by \[
b_{nk}(y)=\int_{\left|t\right|\geq1}\left[a_{nk}+\frac{(t-a_{nk})y^{2}}{y^{2}+(t-a_{nk})^{2}}\right]\, d\mu_{nk}(t).\]

We will need an auxiliary result from \cite{JCBoolean}, where it
was written in a slightly different form.

\begin{lem}
Consider a triangular array $\{ s_{nk}\}_{n,k}$ in $[0,+\infty)$
and two arrays $\{ z_{nk}\}_{n,k}$ , $\{ w_{nk}\}_{n,k}$ in $\mathbb{C}$.
Let $\{ c_{n}\}_{n=1}^{\infty}$ be a sequence in $\mathbb{R}$. Assume
that
\begin{enumerate}
\item $\Im w_{nk}\leq0$ and $\Im z_{nk}\leq0$ for all $n$ and $k$;
\item $z_{nk}=w_{nk}(1+\varepsilon_{nk})$ and $\lim_{n\rightarrow\infty}\varepsilon_{n}=0$,
where $\varepsilon_{n}=\max_{1\leq k\leq k_{n}}\left|\varepsilon_{nk}\right|$;
\item there exists a constant $M>0$ such that $\left|\Re w_{nk}\right|\leq M\left|\Im w_{nk}\right|+s_{nk}$
for all $n$ and $k$.
\end{enumerate}
Then, for sufficiently large $n$, we have \[
\left|\sum_{k=1}^{k_{n}}\left[z_{nk}-w_{nk}\right]\right|\leq(1+M)\varepsilon_{n}\left|\sum_{k=1}^{k_{n}}\Im w_{nk}\right|+\varepsilon_{n}\sum_{k=1}^{k_{n}}s_{nk},\]
and \[
(1-\varepsilon_{n}-\varepsilon_{n}M)\left|\sum_{k=1}^{k_{n}}\Im w_{nk}\right|\leq\left|\sum_{k=1}^{k_{n}}\Im z_{nk}\right|+\varepsilon_{n}\sum_{k=1}^{k_{n}}s_{nk}.\]
In particular, if $\sup_{n\geq1}\sum_{k=1}^{k_{n}}s_{nk}<+\infty$,
then the sequence $\{ c_{n}+\sum_{k=1}^{k_{n}}z_{nk}\}_{n=1}^{\infty}$
converges if and only the sequence $\{ c_{n}+\sum_{k=1}^{k_{n}}w_{nk}\}_{n=1}^{\infty}$
does. Moreover, the two sequences have the same limit.

\end{lem}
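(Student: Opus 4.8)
The argument is elementary, and I would organize it around the identity obtained from hypothesis (2), namely $z_{nk}-w_{nk}=\varepsilon_{nk}w_{nk}$. The first step is to record the pointwise bound
\[
\left|w_{nk}\right|\leq\left|\Re w_{nk}\right|+\left|\Im w_{nk}\right|\leq(1+M)\left|\Im w_{nk}\right|+s_{nk},
\]
which is just hypothesis (3). Since $\Im w_{nk}\leq0$ for all $k$, the sum $\sum_{k=1}^{k_{n}}\Im w_{nk}$ is nonpositive, so $\sum_{k=1}^{k_{n}}\left|\Im w_{nk}\right|=\left|\sum_{k=1}^{k_{n}}\Im w_{nk}\right|$; combining this with the triangle inequality and the definition of $\varepsilon_{n}$ yields
\[
\left|\sum_{k=1}^{k_{n}}\left[z_{nk}-w_{nk}\right]\right|=\left|\sum_{k=1}^{k_{n}}\varepsilon_{nk}w_{nk}\right|\leq\varepsilon_{n}\sum_{k=1}^{k_{n}}\left|w_{nk}\right|\leq(1+M)\varepsilon_{n}\left|\sum_{k=1}^{k_{n}}\Im w_{nk}\right|+\varepsilon_{n}\sum_{k=1}^{k_{n}}s_{nk},
\]
which is the first asserted estimate (in fact valid for every $n$).

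Next I would take imaginary parts in the rearranged identity $w_{nk}=z_{nk}-\varepsilon_{nk}w_{nk}$ and sum over $k$. Using once more that $\sum_{k=1}^{k_{n}}\Im w_{nk}$ and $\sum_{k=1}^{k_{n}}\Im z_{nk}$ are both nonpositive, and then the pointwise bound on $\left|w_{nk}\right|$ above,
\[
\left|\sum_{k=1}^{k_{n}}\Im w_{nk}\right|\leq\left|\sum_{k=1}^{k_{n}}\Im z_{nk}\right|+\varepsilon_{n}\sum_{k=1}^{k_{n}}\left|w_{nk}\right|\leq\left|\sum_{k=1}^{k_{n}}\Im z_{nk}\right|+(1+M)\varepsilon_{n}\left|\sum_{k=1}^{k_{n}}\Im w_{nk}\right|+\varepsilon_{n}\sum_{k=1}^{k_{n}}s_{nk}.
\]
Transposing the middle term to the left-hand side gives the second asserted estimate; this is the step for which one wants $n$ large, namely large enough that $1-\varepsilon_{n}-\varepsilon_{n}M>0$.

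For the ``in particular'' claim, I would set $S=\sup_{n\geq1}\sum_{k=1}^{k_{n}}s_{nk}<+\infty$ and fix $N$ with $(1+M)\varepsilon_{n}\leq\tfrac{1}{2}$ for $n\geq N$. If $\{c_{n}+\sum_{k=1}^{k_{n}}w_{nk}\}$ converges, then its imaginary part $\{\sum_{k=1}^{k_{n}}\Im w_{nk}\}$ is bounded, so the first estimate forces $\sum_{k=1}^{k_{n}}z_{nk}-\sum_{k=1}^{k_{n}}w_{nk}\to0$, whence $\{c_{n}+\sum_{k=1}^{k_{n}}z_{nk}\}$ converges to the same limit. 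Conversely, if $\{c_{n}+\sum_{k=1}^{k_{n}}z_{nk}\}$ converges, then $\{\sum_{k=1}^{k_{n}}\Im z_{nk}\}$ is bounded, and the second estimate applied for $n\geq N$ shows $\{\sum_{k=1}^{k_{n}}\Im w_{nk}\}$ is bounded as well; the first estimate then again gives $\sum_{k=1}^{k_{n}}z_{nk}-\sum_{k=1}^{k_{n}}w_{nk}\to0$, so $\{c_{n}+\sum_{k=1}^{k_{n}}w_{nk}\}$ converges to the same limit.

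I do not expect a genuine obstacle here. The two things needing care are the sign bookkeeping --- the inequalities $\Im w_{nk}\leq0$ and $\Im z_{nk}\leq0$ from hypothesis (1) are exactly what permit passing freely between $\sum_{k}\left|\Im w_{nk}\right|$ and $\left|\sum_{k}\Im w_{nk}\right|$ --- and the fact that the two estimates must be used jointly: in each direction of the final equivalence one must first extract the a priori boundedness of the relevant imaginary-part sum from the convergent sequence before the first estimate can be applied to conclude.
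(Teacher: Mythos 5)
Your proof is correct. Note that the paper itself gives no argument for this lemma --- it is imported from the author's earlier work \cite{JCBoolean}, ``where it was written in a slightly different form'' --- and your elementary route (the identity $z_{nk}-w_{nk}=\varepsilon_{nk}w_{nk}$, the bound $|w_{nk}|\leq(1+M)|\Im w_{nk}|+s_{nk}$, the sign observation $\sum_{k}|\Im w_{nk}|=\bigl|\sum_{k}\Im w_{nk}\bigr|$, and the two-step use of the estimates in the converse direction) is exactly the standard one intended there, so there is nothing to add beyond the cosmetic remark that both displayed inequalities actually hold for every $n$, with ``sufficiently large $n$'' only needed when you use them to extract boundedness.
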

\begin{prop}
Let $\{\mu_{nk}\}_{n,k}$ and $\{\nu_{nk}\}_{n,k}$ be two infinitesimal
arrays in $\mathcal{M}$, and let $\{ c_{n}\}_{n=1}^{\infty}$ be
a sequence of real numbers. Given $\beta\geq1$, suppose $\Gamma_{\alpha,\beta}$
is the truncated cone where the functions $\Phi_{(\mu_{nk},\nu_{nk})}$
are defined
\begin{enumerate}
\item For every $y>\beta$, the sequence $\{ c_{n}+\sum_{k=1}^{k_{n}}\Phi_{(\mu_{nk},\nu_{nk})}(iy)\}_{n=1}^{\infty}$
converges if and only if the sequence $\{ c_{n}+\sum_{k=1}^{k_{n}}E_{\mu_{nk}}(iy)\}_{n=1}^{\infty}$
does. Moreover, the two sequences have the same limit.
\item If $L<+\infty$ as in \textup{(4)} of \textup{Proposition 3.1}, then
$c_{n}+\sum_{k=1}^{k_{n}}\Phi_{(\mu_{nk},\nu_{nk})}(iy)=o(y)$ uniformly
in $n$ as $y\rightarrow\infty$ if and only if $c_{n}+\sum_{k=1}^{k_{n}}E_{\mu_{nk}}(iy)=o(y)$
uniformly in $n$ as $y\rightarrow\infty$.
\end{enumerate}
\end{prop}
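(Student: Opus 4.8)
The plan is to push both equivalences through the ``boolean'' linearisation provided by the functions $f_{nk}$, where the analogous comparisons are already available. Recall from Lemma 3.2 that, for $n$ large, $\Phi_{(\mu_{nk},\nu_{nk})}(z)-a_{nk}=f_{nk}(z)\bigl(1+u_{nk}(z)\bigr)$ on $\Gamma_{\alpha,\beta}$, with $u_n(z)=\max_{1\le k\le k_n}|u_{nk}(z)|$ tending to $0$ at each point of $\Gamma_{\alpha,\beta}$ and with $u_n(z)=o(1)$ uniformly in $n$ as $|z|\to\infty$ in $\Gamma_{\alpha,\beta}$; and recall from parts (3) and (4) of Proposition 3.1 that $c_n+\sum_{k=1}^{k_n}E_{\mu_{nk}}(iy)$ has the same convergence behaviour, and (when $L<+\infty$) the same ``$o(y)$ uniformly'' behaviour, as $c_n+\sum_{k=1}^{k_n}\bigl[a_{nk}+f_{nk}(iy)\bigr]$. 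Hence both (1) and (2) come down to comparing $c_n+\sum_k\Phi_{(\mu_{nk},\nu_{nk})}(iy)$ with $c_n+\sum_k\bigl[a_{nk}+f_{nk}(iy)\bigr]$, that is, to controlling the error $\sum_k\bigl(\Phi_{(\mu_{nk},\nu_{nk})}(iy)-a_{nk}-f_{nk}(iy)\bigr)=\sum_k f_{nk}(iy)\,u_{nk}(iy)$. I would treat this with Lemma 3.3, taking $w_{nk}=f_{nk}(iy)$, $z_{nk}=\Phi_{(\mu_{nk},\nu_{nk})}(iy)-a_{nk}=E_{\mu_{nk}^{\circ}}\bigl(F_{\nu_{nk}^{\circ}}^{-1}(iy)\bigr)$, $\varepsilon_{nk}=u_{nk}(iy)$, and the auxiliary real sequence of that lemma equal to $c_n+\sum_k a_{nk}$. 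Hypotheses (1) and (2) of Lemma 3.3 then hold: $\Im f_{nk}(iy)\le0$; since $F_{\nu_{nk}^{\circ}}^{-1}$ maps its domain into $\mathbb{C}^{+}$ and $E_{\mu_{nk}^{\circ}}$ maps $\mathbb{C}^{+}$ into $\mathbb{C}^{-}\cup\mathbb{R}$, we have $\Im z_{nk}\le0$; and $\varepsilon_n=u_n(iy)\to0$ as $n\to\infty$.

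For assertion (1), with $y>\beta\ge1$ held fixed, I would verify hypothesis (3) of Lemma 3.3 using the comparison inequality without an additive term, namely $|\Re f_{nk}(iy)|\le(3+6y)|\Im f_{nk}(iy)|$ for $n$ large, quoted from \cite{BJadd} just before Lemma 3.3; one may thus take $M=3+6y$ and $s_{nk}=0$. Then $\sup_n\sum_k s_{nk}<+\infty$ trivially, so the final clause of Lemma 3.3 applies and says exactly that $c_n+\sum_k\Phi_{(\mu_{nk},\nu_{nk})}(iy)$ converges if and only if $c_n+\sum_k\bigl[a_{nk}+f_{nk}(iy)\bigr]$ converges, and that the limits agree. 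Together with Proposition 3.1(3) this gives (1).

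For assertion (2), assume now $L<+\infty$, so that Proposition 3.1(4) is at our disposal. Here I would use the second comparison inequality $|\Re f_{nk}(iy)|\le2|\Im f_{nk}(iy)|+|b_{nk}(y)|$, taking $M=2$ and $s_{nk}=|b_{nk}(y)|$, and invoke the first quantitative estimate of Lemma 3.3: for $n$ large, $\bigl|\sum_k\bigl(\Phi_{(\mu_{nk},\nu_{nk})}(iy)-a_{nk}-f_{nk}(iy)\bigr)\bigr|\le 3\,u_n(iy)\sum_k|\Im f_{nk}(iy)|+u_n(iy)\sum_k|b_{nk}(y)|$. One then estimates the two factors multiplying $u_n(iy)$: first, $\sum_k|\Im f_{nk}(iy)|=y\sum_k\int_{-\infty}^{\infty}\frac{t^{2}}{y^{2}+t^{2}}\,d\mu_{nk}^{\circ}(t)\le y\sum_k\int_{-\infty}^{\infty}\frac{t^{2}}{1+t^{2}}\,d\mu_{nk}^{\circ}(t)\le yL$ for $y\ge1$; second, from the crude bound $|b_{nk}(y)|\le(1+y/2)\,\mu_{nk}(\{|t|\ge1\})$ together with $\mu_{nk}(\{|t|\ge1\})\le 10\int_{-\infty}^{\infty}\frac{t^{2}}{1+t^{2}}\,d\mu_{nk}^{\circ}(t)$ (valid for $n$ large, once $\max_k|a_{nk}|<1/2$) one gets $\sum_k|b_{nk}(y)|\le 10\,(1+y/2)\,L$. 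Consequently the right-hand side above is $u_n(iy)\cdot O(y)$, and since $u_n(iy)=o(1)$ uniformly in $n$ as $y\to\infty$ we conclude that $\sum_k\Phi_{(\mu_{nk},\nu_{nk})}(iy)=\sum_k\bigl[a_{nk}+f_{nk}(iy)\bigr]+o(y)$ uniformly in $n$. It follows that $c_n+\sum_k\Phi_{(\mu_{nk},\nu_{nk})}(iy)=o(y)$ uniformly in $n$ if and only if $c_n+\sum_k\bigl[a_{nk}+f_{nk}(iy)\bigr]=o(y)$ uniformly in $n$, and Proposition 3.1(4) then yields (2).

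I expect the only genuine obstacle to be the bound $\sum_k|b_{nk}(y)|=O(y)$ uniformly in $n$ used in (2): since $u_n(iy)$ is only known to vanish as $y\to\infty$, with no control on the rate, the term $u_n(iy)\sum_k|b_{nk}(y)|$ can be absorbed into $o(y)$ only if $\sum_k|b_{nk}(y)|$ grows at most linearly in $y$, and this is precisely what brings in the hypothesis $L<+\infty$ and the passage to the centered measures $\mu_{nk}^{\circ}$ in estimating $b_{nk}(y)$. Part (1), by contrast, is untroubled once one has the foresight to use the comparison inequality with no additive term, so that Lemma 3.3 applies with $s_{nk}\equiv0$ and its convergence clause needs no boundedness hypothesis.
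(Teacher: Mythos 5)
Your proposal is correct and follows essentially the same route as the paper: apply Lemma 3.2 to write $\Phi_{(\mu_{nk},\nu_{nk})}(iy)-a_{nk}=f_{nk}(iy)\bigl(1+u_{nk}(iy)\bigr)$, feed this into Lemma 3.3 (with the two real-versus-imaginary part inequalities for $f_{nk}$ supplying hypothesis (3), without and with the $b_{nk}$ term respectively), and then conclude via parts (3) and (4) of Proposition 3.1. The only difference is cosmetic: where the paper quotes the estimate $\sum_{k}\left|b_{nk}(y)\right|\leq 5yL$ from the cited reference, you rederive an equivalent $O(yL)$ bound directly, which is a valid substitute.
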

\begin{proof}
It was proved in \cite{BJadd} that $\sum_{k=1}^{k_{n}}\left|b_{nk}(y)\right|\leq5yL$
for sufficiently large $n$ and $y\geq1$. Applying Lemmas 3.2 and
3.3 to arrays $\{ f_{nk}(iy)\}_{n,k}$ and $\{\Phi_{(\mu_{nk},\nu_{nk})}(iy)-a_{nk}\}_{n,k}$,
we conclude that the two sequences $\{ c_{n}+\sum_{k=1}^{k_{n}}\left[a_{nk}+f_{nk}(iy)\right]\}_{n=1}^{\infty}$
and $\{ c_{n}+\sum_{k=1}^{k_{n}}\Phi_{(\mu_{nk},\nu_{nk})}(iy)\}_{n=1}^{\infty}$
have the same asymptotic behavior as $n\rightarrow\infty$. Then the
proof is completed by (3) and (4) of Proposition 3.1.
\end{proof}
We are now ready for the main result of this section. Fix real numbers
$\gamma$, $\gamma^{\prime}$ and finite positive Borel measures $\sigma$,
$\sigma^{\prime}$ on $\mathbb{R}$. Recall that $\nu_{*}^{\gamma,\sigma}$,
$\nu_{\boxplus}^{\gamma,\sigma}$ and $\nu_{\uplus}^{\gamma,\sigma}$
are the $*$-, $\boxplus$-, and $\uplus$-infinitely divisible measures
that we have seen in Section 2.3.

\begin{thm}
Let $\{ c_{n}\}_{n=1}^{\infty}$ and $\{ c_{n}^{\prime}\}_{n=1}^{\infty}$
be two sequences in $\mathbb{R}$, and let $\{\mu_{nk}\}_{n,k}$ and
$\{\nu_{nk}\}_{n,k}$ be two infinitesimal arrays in $\mathcal{M}$.
Suppose that the sequence $\delta_{c_{n}^{\prime}}\boxplus\nu_{n1}\boxplus\nu_{n2}\boxplus\cdots\boxplus\nu_{nk_{n}}$
converges weakly to $\nu_{\boxplus}^{\gamma^{\prime},\sigma^{\prime}}$
as $n\rightarrow\infty$. Then the following assertions are equivalent:
\begin{enumerate}
\item The sequence $\delta_{c_{n}}\boxplus_{\text{c}}\mu_{n1}\boxplus_{\text{c}}\mu_{n2}\boxplus_{\text{c}}\cdots\boxplus_{\text{c}}\mu_{nk_{n}}$
converges weakly to $\mu\in\mathcal{M}$.
\item The sequence $\delta_{c_{n}}\uplus\mu_{n1}\uplus\mu_{n2}\uplus\cdots\uplus\mu_{nk_{n}}$
converges weakly to $\nu_{\uplus}^{\gamma,\sigma}$.
\item The sequence $\delta_{c_{n}}\boxplus\mu_{n1}\boxplus\mu_{n2}\boxplus\cdots\boxplus\mu_{nk_{n}}$
converges weakly to $\nu_{\boxplus}^{\gamma,\sigma}$.
\item The sequence $\delta_{c_{n}}*\mu_{n1}*\mu_{n2}*\cdots*\mu_{nk_{n}}$
converges weakly to $\nu_{*}^{\gamma,\sigma}$.
\item The sequence of measures \[
d\sigma_{n}(t)=\sum_{k=1}^{k_{n}}\frac{t^{2}}{1+t^{2}}\, d\mu_{nk}^{\circ}(t)\]
converges weakly on $\mathbb{R}$ to the measure $\sigma$, and the
sequence of numbers \[
\gamma_{n}=c_{n}+\sum_{k=1}^{k_{n}}\left[a_{nk}+\int_{-\infty}^{\infty}\frac{t}{1+t^{2}}\, d\mu_{nk}^{\circ}(t)\right]\]
converges to $\gamma$ as $n\rightarrow\infty$.
\end{enumerate}
Moreover, if \textup{(1)}-\textup{(5)} are satisfied, then we have
$\Phi_{(\mu,\nu_{\boxplus}^{\gamma^{\prime},\sigma^{\prime}})}=E_{\nu_{\uplus}^{\gamma,\sigma}}$
in a truncated cone.

\end{thm}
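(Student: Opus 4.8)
The plan is to treat the equivalences $(2)\Leftrightarrow(3)\Leftrightarrow(4)\Leftrightarrow(5)$ as the already-known limit theorems for infinitesimal triangular arrays: $(4)\Leftrightarrow(5)$ is the classical limit theorem (see \cite{Billingsley}), $(3)\Leftrightarrow(5)$ is its free counterpart due to Bercovici--Pata (\cite{BPHincin}), and $(2)\Leftrightarrow(5)$ is the Boolean analogue established in \cite{JCBoolean}; note in particular that $(5)$ exhibits $L=\sup_n\sigma_n(\mathbb{R})$ as the total mass of a weakly convergent sequence of finite measures with finite limit, so each of $(2)$--$(5)$ carries $L<+\infty$. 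Thus the genuinely new content is the equivalence $(1)\Leftrightarrow(2)$, out of which the displayed ``moreover'' will also fall. I would handle this through the $\Phi$-transform of c-free convolution and the $E$-transform of Boolean convolution, with Propositions 2.4, 3.1 and 3.4 doing the analysis.

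First I would fix a truncated cone $\Gamma_{\alpha,\beta}$ with $\beta\ge1$ on which, for all sufficiently large $n$, every $F_{\nu_{nk}}^{-1}$ and hence every $\Phi_{(\mu_{nk},\nu_{nk})}$ is defined; this is legitimate by the infinitesimality of $\{\nu_{nk}\}_{n,k}$ and the remark preceding Lemma 3.2. Writing $\mu_n,\nu_n$ for the iterated c-free and free convolutions, the additivity of $\Phi$ (Proposition 2.2, iterated by associativity) together with $\Phi_{(\delta_{c_n},\delta_{c_n'})}\equiv c_n$ gives $\Phi_{(\mu_n,\nu_n)}=c_n+\sum_{k=1}^{k_n}\Phi_{(\mu_{nk},\nu_{nk})}$ on $\Gamma_{\alpha,\beta}$. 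Since $\nu_n\to\nu:=\nu_{\boxplus}^{\gamma',\sigma'}$ weakly by hypothesis, Proposition 2.4(2) rewrites $(1)$ as: $\lim_n\Phi_{(\mu_n,\nu_n)}(iy)$ exists for each $y>\beta$, and $\Phi_{(\mu_n,\nu_n)}(iy)=o(y)$ uniformly in $n$. On the other hand, from $E_{\mu\uplus\rho}=E_\mu+E_\rho$ and $E_{\delta_{c_n}}\equiv c_n$, the remark following Proposition 2.4 makes weak convergence of the Boolean convolutions $\delta_{c_n}\uplus\mu_{n1}\uplus\cdots\uplus\mu_{nk_n}$ equivalent to: $c_n+\sum_{k=1}^{k_n}E_{\mu_{nk}}(iy)$ converges for each $y>\beta$, and is $o(y)$ uniformly in $n$. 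Proposition 3.4(1) already matches the two ``convergence'' clauses, with equal limits; Proposition 3.4(2) will match the two ``$o(y)$'' clauses \emph{provided $L<+\infty$}; so the crux is to establish $L<+\infty$ from either hypothesis and then to identify the common limit function as $E_{\nu_{\uplus}^{\gamma,\sigma}}$.

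The step I expect to be the main obstacle is securing $L<+\infty$, which is the standing hypothesis of Proposition 3.4(2) yet belongs to neither $(1)$ nor $(2)$, and so must be bootstrapped. Under $(2)$ it is free of charge via the cited equivalence $(2)\Leftrightarrow(5)$. Under $(1)$ I would extract it from imaginary parts at a fixed $y_0>\beta$ (so $y_0\ge1$). Since $a_{nk}$ is real, Lemma 3.2 gives $\Im\Phi_{(\mu_{nk},\nu_{nk})}(iy_0)=\Im f_{nk}(iy_0)+\Im\bigl(f_{nk}(iy_0)\,u_{nk}(iy_0)\bigr)$; summing over $k$, using the comparability estimate recorded after Lemma 3.2 in the form $|f_{nk}(iy_0)|\le C_0\,|\Im f_{nk}(iy_0)|$ with $C_0=C_0(y_0)$, using that all the $\Im f_{nk}(iy_0)$ are $\le0$, and using $u_n(iy_0)\to0$, one obtains for all large $n$
\[
\Bigl|\sum_{k=1}^{k_n}\Im f_{nk}(iy_0)\Bigr|\ \le\ 2\,\bigl|\Im\Phi_{(\mu_n,\nu_n)}(iy_0)\bigr|.
\]
The left-hand side equals $y_0\sum_{k=1}^{k_n}\int_{-\infty}^{\infty}\frac{t^2}{y_0^2+t^2}\,d\mu_{nk}^{\circ}(t)\ge y_0^{-1}\sum_{k=1}^{k_n}\int_{-\infty}^{\infty}\frac{t^2}{1+t^2}\,d\mu_{nk}^{\circ}(t)$, while under $(1)$ the right-hand side is bounded, being the imaginary part of a convergent (hence bounded) sequence; so $L<+\infty$. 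With $L<+\infty$ in hand, Proposition 3.4 supplies both halves of the equivalence.

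Finally, to identify the limit and read off the displayed identity: in the direction $(2)\Rightarrow(1)$, condition $(2)$ forces the common limit function to be $E_{\nu_{\uplus}^{\gamma,\sigma}}$, so Proposition 2.4(2) yields weak convergence of $\mu_n$ to a measure $\mu$ with $\Phi_{(\mu,\nu)}(iy)=E_{\nu_{\uplus}^{\gamma,\sigma}}(iy)$ for all $y>\beta$; since $\Phi_{(\mu,\nu)}$ is analytic on $\Gamma_{\alpha,\beta}$ while $E_{\nu_{\uplus}^{\gamma,\sigma}}$ is analytic on all of $\mathbb{C}^{+}$, the identity theorem upgrades this to $\Phi_{(\mu,\nu_{\boxplus}^{\gamma',\sigma'})}=E_{\nu_{\uplus}^{\gamma,\sigma}}$ on the cone --- precisely the asserted ``moreover''. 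Conversely, if $(1)$ holds, then Proposition 2.4(2) and Proposition 3.4(1) give $c_n+\sum_kE_{\mu_{nk}}(iy)\to\Phi_{(\mu,\nu)}(iy)$ with uniform $o(y)$-control, so $\delta_{c_n}\uplus\mu_{n1}\uplus\cdots\uplus\mu_{nk_n}$ converges weakly by the remark following Proposition 2.4; the cited Boolean limit theorem then reads off the weak convergence $\sigma_n\to\sigma$ and $\gamma_n\to\gamma$ of $(5)$, and hence $(2)$. This closes the cycle and completes the proof.
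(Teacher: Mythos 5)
Your proposal is correct and follows essentially the same route as the paper: delegate the equivalence of (2)--(5) to the known classical/free/Boolean limit theorems, reduce everything to $(1)\Leftrightarrow(2)$ via the additivity of $\Phi$ and $E$ together with Propositions 2.4, 3.1 and 3.4, bootstrap $L<+\infty$, and identify the limit function by the uniqueness principle. The only minor difference is how $L<+\infty$ is extracted under (1): you bound $\bigl|\sum_{k}\Im f_{nk}(iy_{0})\bigr|$ at a fixed $iy_{0}$ in the cone using Lemma 3.2 and the estimate $\left|\Re f_{nk}(iy)\right|\leq(3+6y)\left|\Im f_{nk}(iy)\right|$, whereas the paper applies Montel's theorem to the normal family $\{c_{n}+\sum_{k}[a_{nk}+f_{nk}]\}$ to evaluate at $z=i$ and reads $\sigma_{n}(\mathbb{R})$ off formula (3.2); both arguments are valid and give the same conclusion.
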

\begin{proof}
The equivalence of (2), (3), (4) and (5) was proved in \cite{JCBoolean}.
We will show the equivalence of (1) and (2). Assume that (1) holds.
Define\[
\mu_{n}=\delta_{c_{n}}\boxplus_{\text{c}}\mu_{n1}\boxplus_{\text{c}}\mu_{n2}\boxplus_{\text{c}}\cdots\boxplus_{\text{c}}\mu_{nk_{n}},\;\nu_{n}=\delta_{c_{n}^{\prime}}\boxplus\nu_{n1}\boxplus\nu_{n2}\boxplus\cdots\boxplus\nu_{nk_{n}},\]
and \[
\rho_{n}=\delta_{c_{n}}\uplus\mu_{n1}\uplus\mu_{n2}\uplus\cdots\uplus\mu_{nk_{n}},\qquad n\in\mathbb{N}.\]
Then, by the weak convergence of $\{\nu_{n}\}_{n=1}^{\infty},$ there
exists a truncated cone $\Gamma_{\alpha,\beta}$ such that the functions
$\Phi_{(\mu_{n},\nu_{n})}$ are defined in $\Gamma_{\alpha,\beta}$.
Thus we have \[
\Phi_{(\mu_{n},\nu_{n})}(z)=c_{n}+\sum_{k=1}^{k_{n}}\Phi_{(\mu_{nk},\nu_{nk})}(z)\]
in the cone $\Gamma_{\alpha,\beta}$ and \[
E_{\rho_{n}}(z)=c_{n}+\sum_{k=1}^{k_{n}}E_{\mu_{nk}}(z),\qquad z\in\mathbb{C}^{+}.\]
Also, note that \begin{equation}
c_{n}+\sum_{k=1}^{k_{n}}\left[a_{nk}+f_{nk}(z)\right]=\gamma_{n}+\int_{-\infty}^{\infty}\frac{1+tz}{z-t}\, d\sigma_{n}(t),\label{eq:3.2}\end{equation}
and that the quantity $L$ as in (4) of Proposition 3.1 is precisely
$\sup_{n\geq1}\sigma_{n}(\mathbb{R})$.

Propositions 2.4, 3.1 and 3.4 imply that \[
\lim_{n\rightarrow\infty}E_{\rho_{n}}(iy)=\Phi_{(\mu,\nu_{\boxplus}^{\gamma^{\prime},\sigma^{\prime}})}(iy)=\lim_{n\rightarrow\infty}\left(c_{n}+\sum_{k=1}^{k_{n}}\left[a_{nk}+f_{nk}(iy)\right]\right),\qquad y>\beta.\]
Since $\{ c_{n}+\sum_{k=1}^{k_{n}}\left[a_{nk}+f_{nk}\right]\}_{n=1}^{\infty}$
is a normal family, an application of Montel's theorem shows that
the sequence $\{ c_{n}+\sum_{k=1}^{k_{n}}\left[a_{nk}+f_{nk}(i)\right]\}_{n=1}^{\infty}$
converges to $\Phi_{(\mu,\nu_{\boxplus}^{\gamma^{\prime},\sigma^{\prime}})}(i)$.
Hence (3.2) implies that \begin{eqnarray*}
\lim_{n\rightarrow\infty}\sigma_{n}(\mathbb{R}) & = & \lim_{n\rightarrow\infty}-\Im\left(c_{n}+\sum_{k=1}^{k_{n}}\left[a_{nk}+f_{nk}(i)\right]\right)\\
 & = & -\Im\Phi_{(\mu,\nu_{\boxplus}^{\gamma^{\prime},\sigma^{\prime}})}(i)<+\infty.\end{eqnarray*}
We deduce that $L=\sup_{n\geq1}\sigma_{n}(\mathbb{R})<+\infty$, and
therefore (2) holds by Propositions 2.4 and 3.4. Moreover, in this
case we have $\Phi_{(\mu,\nu_{\boxplus}^{\gamma^{\prime},\sigma^{\prime}})}=E_{\nu_{\uplus}^{\gamma,\sigma}}$
in the cone $\Gamma_{\alpha,\beta}$ by the uniqueness principle in
complex analysis.

Conversely, suppose now (2) holds. Using the equivalence of (2) and
(5), we see that $L<+\infty$ and hence (1) follows again from Propositions
2.4 and 3.4.
\end{proof}
Theorem 3.5 shows that the reciprocal of the Cauchy transform of the
limit law $\mu$ is given by \begin{equation}
F_{\mu}(z)=z-E_{\nu_{\uplus}^{\gamma,\sigma}}\left(F_{\nu_{\boxplus}^{\gamma^{\prime},\sigma^{\prime}}}(z)\right),\qquad z\in\mathbb{C}^{+}.\label{eq:3.3}\end{equation}
Therefore, in order to determine the limit law $\mu$, one first finds
the parameters $\gamma$, $\gamma^{\prime}$, $\sigma$ and $\sigma^{\prime}$
by (5) of Theorem 3.5, then uses the formulas (2.4) and (2.5) to obtain
the function $F_{\mu}$ from (3.3). Finally, the measure $\mu$ is
recovered from the function $G_{\mu}$ as we have seen in Section
2.1.

In this spirit, we see that the results in \cite{BLS} concerning
the c-free analogues of the central and Poisson limit theorems are
direct consequences of Theorem 3.5. Indeed, given $\alpha,\beta\geq0$,
in case $\gamma=\gamma^{\prime}=0$, $\sigma=\alpha^{2}\delta_{0}$
and $\sigma^{\prime}=\beta^{2}\delta_{0}$, the limit law $\mu$ is
a c-free version of the centered Gaussian distribution on $\mathbb{R}$
which appeared in \cite[Theorem 4.3]{BLS}. A c-free analogue of the
Poisson law as in \cite[Theorem 4.4]{BLS} is obtained when $\gamma=\alpha/2$,
$\gamma^{\prime}=\beta/2$, $\sigma=(\alpha/2)\delta_{1}$ and $\sigma^{\prime}=(\beta/2)\delta_{1}$.

It is also interesting to note that (3.3) shows that the limit law
$\mu=\delta_{0}$ if and only if $\gamma=0$ and the measure $\sigma=\delta_{0}$.
Thus, by Theorem 3.5, one obtains necessary and sufficient conditions
for the weak convergence to the Dirac measure at the origin, which
can be viewed as the c-free analogue of the weak law of large numbers.

\section{Application to the $\boxplus_{\text{c}}$-infinite Divisibility}

In this section we give various characterizations of the $\boxplus_{\text{c}}$-infinite
divisibility with the help of Theorem 3.5. The analogue of Theorem
4.1 for compactly supported measures was obtained earlier in \cite{Krystek}
by analyzing the solutions of a complex Burger's equation. The approach
we presented here deals with general probability measures, and does
not involve such a differential equation.

Before outlining the main result we need a definition. A family of
pairs $\{(\mu_{t},\nu_{t})\}_{t\geq0}$ of probability measures on
$\mathbb{R}$ is said to be a \emph{weakly continuous semigroup} relative
to the convolution $\boxplus_{\text{c}}$ if $(\mu_{t},\nu_{t})\boxplus_{\text{c}}(\mu_{s},\nu_{s})=(\mu_{t+s},\nu_{t+s})$
for $t,s\geq0$, and the maps $t\mapsto\mu_{t}$ and $t\mapsto\nu_{t}$
are continuous.

\begin{thm}
Given a $\boxplus$-infinitely divisible measure $\nu\in\mathcal{M}$
and a measure $\mu\in\mathcal{M}$, the following statements are equivalent:
\begin{enumerate}
\item The pair $(\mu,\nu)$ is $\boxplus_{\text{c}}$-infinitely divisible.
\item There exists a real number $\gamma$ and a finite positive Borel measure
$\sigma$ on $\mathbb{R}$ such that the function\[
\Phi_{(\mu,\nu)}(z)=\gamma+\int_{-\infty}^{\infty}\frac{1+tz}{z-t}\, d\sigma(t),\qquad z\in\mathbb{C}^{+}.\]

\item The function $\Phi_{(\mu,\nu)}$ can be analytically continued to
$\mathbb{C}^{+}$.
\item There exists a weakly continuous semigroup $\{(\mu_{t},\nu_{t})\}_{t\geq0}$
relative to $\boxplus_{\text{c}}$ such that $(\mu_{0},\nu_{0})=(\delta_{0},\delta_{0})$
and $(\mu_{1},\nu_{1})=(\mu,\nu)$.
\end{enumerate}
Moreover, if statements \textup{(1)} to \textup{(4)} are all satisfied,
then the limit \[
\gamma=\lim_{t\rightarrow0^{+}}\left[\frac{1}{t}\int_{-\infty}^{\infty}\frac{x}{1+x^{2}}\, d\mu_{t}(x)\right]\]
exists and the measure $\sigma$ is the weak limit of measures \[
\frac{1}{t}\,\frac{x^{2}}{1+x^{2}}\, d\mu_{t}(x)\]
as $t\rightarrow0^{+}$.

\end{thm}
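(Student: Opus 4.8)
The strategy is to set up a cycle of implications, leaning on the infinitesimal‐array machinery of Section 3 to pass from divisibility to the integral representation, and on the structure of the classical/boolean Lévy–Hinčin formulas to close the loop. First I would prove (1) $\Rightarrow$ (2). Suppose $(\mu,\nu)$ is $\boxplus_{\text{c}}$-infinitely divisible, so for each $n$ there are $(\mu_n,\nu_n)$ with $(\mu,\nu)=(\mu_n,\nu_n)^{\boxplus_{\text{c}}n}$; in particular $\nu=\nu_n^{\boxplus n}$, and since $\nu$ is $\boxplus$-infinitely divisible this is consistent. The key point is that the array $\{\mu_{nk}:=\mu_n\}_{1\le k\le n}$ is automatically infinitesimal: this follows from the classical argument that the $n$-th convolution roots of a fixed measure concentrate at a point, combined with a recentering so that the point is the origin (one may absorb the shift into the $c_n$'s). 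Likewise $\{\nu_{nk}:=\nu_n\}$ is infinitesimal and the row sums $\delta_{c_n'}\boxplus\nu_n^{\boxplus n}$ are the constant sequence $\nu=\nu_{\boxplus}^{\gamma',\sigma'}$ for suitable $\gamma',\sigma'$, hence trivially convergent. Now apply Theorem 3.5: statement (1) of that theorem holds (the c-free row sums equal $\mu$ for every $n$, hence converge), so statement (2) gives that $\delta_{c_n}\uplus\mu_n^{\uplus n}$ converges to some $\nu_{\uplus}^{\gamma,\sigma}$, and the final line of Theorem 3.5 yields $\Phi_{(\mu,\nu)}=\Phi_{(\mu,\nu_{\boxplus}^{\gamma',\sigma'})}=E_{\nu_{\uplus}^{\gamma,\sigma}}$ on a truncated cone. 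Combining this with the Nevanlinna representation (2.5) of $E_{\nu_{\uplus}^{\gamma,\sigma}}$ and analytic continuation gives exactly the formula in (2) on all of $\mathbb{C}^+$.

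The implication (2) $\Rightarrow$ (3) is immediate, since the integral formula in (2) manifestly defines an analytic function on $\mathbb{C}^+$. For (3) $\Rightarrow$ (4), I would use the integral representation to build the semigroup explicitly: if $\Phi_{(\mu,\nu)}$ extends to $\mathbb{C}^+$ then by the Nevanlinna theory it has the form $\gamma+\int\frac{1+tz}{z-t}\,d\sigma(t)$, so for each $t\ge0$ define $\mu_t$ to be the unique measure with $\Phi_{(\mu_t,\nu_t)}=t\gamma+\int\frac{1+sz}{z-s}\,d(t\sigma)(s)=E_{\nu_{\uplus}^{t\gamma,t\sigma}}\circ F_{\nu_t}$, where $\nu_t=\nu_{\boxplus}^{t\gamma',t\sigma'}$ is the $\boxplus$-semigroup through $\nu$ (using (2.4)). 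Existence and uniqueness of $\mu_t$ come from Proposition 2.2 applied iteratively (or directly: the prescribed $\Phi$ additivity over $t$ is exactly the defining property of $\boxplus_{\text{c}}$), and the semigroup law $(\mu_t,\nu_t)\boxplus_{\text{c}}(\mu_s,\nu_s)=(\mu_{t+s},\nu_{t+s})$ follows from additivity of $\Phi$. Weak continuity in $t$ follows from continuity of the parameters $t\gamma,t\sigma$ and Proposition 2.4(2). Finally (4) $\Rightarrow$ (1) is trivial: given the semigroup, take $(\mu_n,\nu_n)=(\mu_{1/n},\nu_{1/n})$.

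For the ``moreover'' statement about the formulas for $\gamma$ and $\sigma$, I would work inside the semigroup of part (4). By construction $\Phi_{(\mu_t,\nu_t)}=t\Phi_{(\mu,\nu)}=t\bigl(\gamma+\int\frac{1+xz}{z-x}\,d\sigma(x)\bigr)$, so $\frac1t\Phi_{(\mu_t,\nu_t)}\to\gamma+\int\frac{1+xz}{z-x}\,d\sigma(x)$ as $t\to0^+$. On the other hand, Proposition 3.4 (or directly Propositions 3.1 and 2.4) relates $\Phi_{(\mu_t,\nu_t)}$ to $E_{\mu_t}$ and hence to the ``boolean'' parameters of $\mu_t$: writing the Nevanlinna representation $E_{\mu_t}(z)=\gamma^{(t)}+\int\frac{1+xz}{z-x}\,d\sigma^{(t)}(x)$ with $\gamma^{(t)}=\int\frac{x}{1+x^2}\,d\mu_t(x)+(\text{shift terms})$ and $d\sigma^{(t)}(x)$ comparable to $\frac{x^2}{1+x^2}\,d\mu_t(x)$, one matches the two representations and extracts $\gamma=\lim_{t\to0^+}\frac1t\int\frac{x}{1+x^2}\,d\mu_t(x)$ and $\sigma=\text{w-}\lim_{t\to0^+}\frac1t\frac{x^2}{1+x^2}\,d\mu_t(x)$; here the infinitesimality of the array $\{\mu_{1/n}\}$ (which sends the ``shift'' corrections $a_{nk}$ and the error factors $u_{nk}$ to zero) is what makes the boolean and c-free parameters agree in the limit. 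The main obstacle, and the place requiring the most care, is the first implication (1) $\Rightarrow$ (2): one must verify that the triangular array built from the convolution roots is genuinely infinitesimal and properly recentered so that Theorem 3.5 applies with the prescribed $\gamma',\sigma'$ coming from the fixed $\boxplus$-infinitely divisible $\nu$; everything else is bookkeeping with the integral representations and the additivity of $\Phi_{(\cdot,\cdot)}$.
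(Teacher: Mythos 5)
Your plan follows essentially the same route as the paper: (1) $\Rightarrow$ (2) by turning the $\boxplus_{\text{c}}$-roots into constant infinitesimal arrays and invoking Theorem 3.5, (2) $\Leftrightarrow$ (3) via the Nevanlinna representation, (2) $\Rightarrow$ (4) by defining $\mu_t$ along the $\boxplus$-semigroup $\{\nu_t\}$ through the scaled function $t\,\Phi_{(\mu,\nu)}$, (4) $\Rightarrow$ (1) trivially, and the ``moreover'' part from the Section 3 machinery as $t\rightarrow 0^{+}$. In outline this is correct.

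Two places should be tightened. First, infinitesimality of the arrays is not ``the classical argument'' about convolution roots (the roots here are c-free and free, not classical), and no recentering is needed: the paper deduces $\nu_n\rightarrow\delta_0$ from $F_{\nu_n}^{-1}(z)-z=\bigl[F_{\nu}^{-1}(z)-z\bigr]/n$ together with Proposition 2.3 of \cite{BPstable}, and then $\mu_n\rightarrow\delta_0$ from the identity $\Phi_{(\mu_n,\nu_n)}=\Phi_{(\mu,\nu)}/n$ and Proposition 2.4(2); you gesture at this machinery but should use it explicitly rather than appeal to a classical concentration-plus-recentering argument. Second, ``Proposition 2.2 applied iteratively'' only yields $\mu_t$ for rational $t$; the correct construction (which the paper uses, and which you also mention in passing) is to define $\mu_t$ for every $t\geq 0$ directly by $E_{\mu_t}=t\bigl(\Phi_{(\mu,\nu)}\circ F_{\nu_t}\bigr)$, which is the $E$-function of a probability measure because $\Phi_{(\mu,\nu)}$ has the Nevanlinna form; note also that your displayed identity should read $\Phi_{(\mu_t,\nu_t)}=E_{\nu_{\uplus}^{t\gamma,t\sigma}}$, equivalently $E_{\mu_t}=E_{\nu_{\uplus}^{t\gamma,t\sigma}}\circ F_{\nu_t}$, not $\Phi_{(\mu_t,\nu_t)}=E_{\nu_{\uplus}^{t\gamma,t\sigma}}\circ F_{\nu_t}$. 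For the ``moreover'' statement, the paper avoids matching Nevanlinna parameters by hand: it sets $k_n=[1/t_n]$, uses the semigroup to get $\mu_{t_n}^{\boxplus_{\text{c}}k_n}=\mu_{t_nk_n}\rightarrow\mu$, reads the parameters off condition (5) of Theorem 3.5, and then removes the centering because $a_n\rightarrow 0$; this is the same machinery you invoke, packaged so that the tightness and parameter-extraction steps are already done.
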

\begin{proof}
We first prove that (1) implies (2). Assume that (1) holds. For every
$n\in\mathbb{N}$, we have \[
\mu=\underbrace{\mu_{n}\boxplus_{\text{c}}\mu_{n}\boxplus_{\text{c}}\cdots\boxplus_{\text{c}}\mu_{n}}_{n\,\text{times}}\qquad\text{and}\qquad\nu=\underbrace{\nu_{n}\boxplus\nu_{n}\boxplus\cdots\boxplus\nu_{n}}_{n\,\text{times}},\]
where $\mu_{n},\nu_{n}\in\mathcal{M}$. Then we have $F_{\nu_{n}}^{-1}(z)-z=\left[F_{\nu}^{-1}(z)-z\right]/n$,
and hence the measures $\nu_{n}$ converge weakly to $\delta_{0}$
as $n\rightarrow\infty$ by Proposition 2.3 in \cite{BPstable}. On
the other hand, the identity $\Phi_{(\mu_{n},\nu_{n})}(z)=\Phi_{(\mu,\nu)}(z)/n$
and Proposition 2.4 imply that the measures $\mu_{n}$ converge weakly
to $\delta_{0}$ as well. Let us introduce two infinitesimal arrays
$\{\mu_{nk}\}_{n,k}$ and $\{\nu_{nk}\}_{n,k}$ by setting $\mu_{nk}=\mu_{n}$
and $\nu_{nk}=\nu_{n}$, where $1\leq k\leq n$. Then the measure
$\mu$ (resp., $\nu$) can be viewed as the weak limit of the c-free
(resp., free) convolutions $\mu_{n1}\boxplus_{\text{c}}\mu_{n2}\boxplus_{\text{c}}\cdots\boxplus_{\text{c}}\mu_{nn}$
(resp., $\nu_{n1}\boxplus\nu_{n2}\boxplus\cdots\boxplus\nu_{nn}$).
Hence (2) follows form Theorem 3.5.

The equivalence of (2) and (3) is based on the Nevanlinna integral
representation of analytic self-mappings in $\mathbb{C}^{+}$(see
\cite{Achieser}).

We next show that (2) implies (4). Suppose that (2) holds. It was
proved in \cite{BVunbdd} that there exists a weakly continuous semigroup
$\{\nu_{t}\}_{t\geq0}$ relative to $\boxplus$ so that $\nu_{0}=\delta_{0}$
and $\nu_{1}=\nu$. Then, for every $t\ge0$, there exists a unique
probability measure $\mu_{t}$ on $\mathbb{R}$ such that $E_{\mu_{t}}(z)=t\left(\Phi_{(\mu,\nu)}\left(F_{\nu_{t}}(z)\right)\right)$
for all $z\in\mathbb{C}^{+}$, where $\mu_{0}=\delta_{0}$. It is
easy to see that the c-free convolution semigoup $\{(\mu_{t},\nu_{t})\}_{t\geq0}$
has the desired properties.

The implication form (4) to (1) is obvious. To finish the proof, we
only need to show the assertions about the measure $\sigma$ and the
number $\gamma$. Assume that the pair $(\mu,\nu)$ is $\boxplus_{\text{c}}$-infinitely
divisible, and let $\{(\mu_{t},\nu_{t})\}_{t\geq0}$ be the corresponding
convolution semigroup as in (4). Let $\{ t_{n}\}_{n=1}^{\infty}$
be a sequence of positive real numbers such that $\lim_{n\rightarrow\infty}t_{n}=0$.
Let $k_{n}=[1/t_{n}]$ for every $n\in\mathbb{N}$, where $[x]$ denotes
the largest integer that is no greater than the real number $x$.
Observe that \[
1-t_{n}<t_{n}k_{n}\leq1,\qquad n\in\mathbb{N}.\]
Hence we have $\lim_{n\rightarrow\infty}t_{n}k_{n}=1$, and further
the properties of the semigroup $\{(\mu_{t},\nu_{t})\}_{t\geq0}$
show that the c-free convolutions\[
\underbrace{\mu_{t_{n}}\boxplus_{\text{c}}\mu_{t_{n}}\boxplus_{\text{c}}\cdots\boxplus_{\text{c}}\mu_{t_{n}}}_{k_{n}\,\text{times}}=\mu_{t_{n}k_{n}}\]
converge weakly to the measure $\mu_{1}=\mu$ as $n\rightarrow\infty$.
Theorem 3.5 then implies that the measures \[
\frac{1}{t_{n}}\,\frac{x^{2}}{1+x^{2}}\, d\mu_{t_{n}}^{\circ}(x)=\frac{1}{t_{n}k_{n}}\, k_{n}\frac{x^{2}}{1+x^{2}}\, d\mu_{t_{n}}^{\circ}(x)\]
converge weakly to the measure $\sigma$ and \[
\gamma=\lim_{n\rightarrow\infty}\left[\frac{1}{t_{n}}\int_{-\infty}^{\infty}\frac{x}{1+x^{2}}\, d\mu_{t}^{\circ}(x)\right],\]
where the centered measures $d\mu_{t_{n}}^{\circ}(x)=d\mu_{t_{n}}(x+a_{n})$
and the numbers $a_{n}$ are defined as in (3.1). The desired result
follows from the facts that $\lim_{n\rightarrow\infty}a_{n}=0$, and
that the topology on the set $\mathcal{M}$ determined by the weak
convergence of measures is actually metrizable \cite[Problem 14.5]{Billingsley}.
\end{proof}
We conclude this section by showing a result, which is a c-free analogue
of Hin\v{c}in's classical theorem on the $*$-infinite divisibility
\cite{Hincin}.

\begin{cor}
Let $\{ c_{n}\}_{n=1}^{\infty}$ and $\{ c_{n}^{\prime}\}_{n=1}^{\infty}$
be two sequences in $\mathbb{R}$, and let $\{\mu_{nk}\}_{n,k}$ and
$\{\nu_{nk}\}_{n,k}$ be two infinitesimal arrays in $\mathcal{M}$.
Suppose that the sequence $\delta_{c_{n}}\boxplus_{\text{c}}\mu_{n1}\boxplus_{\text{c}}\mu_{n2}\boxplus_{\text{c}}\cdots\boxplus_{\text{c}}\mu_{nk_{n}}$
converges weakly to $\mu$, and that the sequence $\delta_{c_{n}^{\prime}}\boxplus\nu_{n1}\boxplus\nu_{n2}\boxplus\cdots\boxplus\nu_{nk_{n}}$
converges weakly to $\nu$. Then the pair $(\mu,\nu)$ is $\boxplus_{\text{c}}$-infinitely
divisible.
\end{cor}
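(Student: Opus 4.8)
The plan is to reduce the statement to Theorem 3.5 and Theorem 4.1. First I would observe that, because the array $\{\nu_{nk}\}_{n,k}$ is infinitesimal and the free convolutions $\delta_{c_n'}\boxplus\nu_{n1}\boxplus\cdots\boxplus\nu_{nk_n}$ converge weakly to $\nu$, the measure $\nu$ is necessarily $\boxplus$-infinitely divisible; this is the free analogue of Hin\v{c}in's theorem (it follows from the equivalence between the free limit problem and the convergence of the associated L\'evy parameters, see \cite{BPHincin,BVunbdd}). Thus we may fix $\gamma'\in\mathbb{R}$ and a finite positive Borel measure $\sigma'$ on $\mathbb{R}$ with $\nu=\nu_{\boxplus}^{\gamma',\sigma'}$.

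With $\nu$ written in this form, the hypotheses of Theorem 3.5 are satisfied, and assertion (1) there holds by our assumption that the c-free convolutions $\delta_{c_n}\boxplus_{\text{c}}\mu_{n1}\boxplus_{\text{c}}\cdots\boxplus_{\text{c}}\mu_{nk_n}$ converge weakly to $\mu$. Hence conditions (1)--(5) of Theorem 3.5 all hold, and the last sentence of that theorem provides a real number $\gamma$ and a finite positive Borel measure $\sigma$ on $\mathbb{R}$ such that $\Phi_{(\mu,\nu)}=E_{\nu_{\uplus}^{\gamma,\sigma}}$ on a truncated cone.

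Next I would invoke the representation (2.5): $E_{\nu_{\uplus}^{\gamma,\sigma}}(z)=\gamma+\int_{-\infty}^{\infty}\frac{1+tz}{z-t}\,d\sigma(t)$ is analytic on all of $\mathbb{C}^{+}$. By the uniqueness principle for analytic functions, $\Phi_{(\mu,\nu)}$ therefore extends analytically to $\mathbb{C}^{+}$, so condition (3) of Theorem 4.1 holds. Since $\nu$ is $\boxplus$-infinitely divisible, Theorem 4.1 applies, and its equivalence $(3)\Leftrightarrow(1)$ yields that $(\mu,\nu)$ is $\boxplus_{\text{c}}$-infinitely divisible, as claimed.

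The main obstacle is the very first step, namely confirming that the free marginal limit $\nu$ is genuinely $\boxplus$-infinitely divisible, since Theorem 4.1 assumes this and Theorem 3.5 is phrased for free arrays converging to a measure already displayed in the form $\nu_{\boxplus}^{\gamma',\sigma'}$. This is precisely the free Hin\v{c}in theorem for infinitesimal arrays; if one prefers not to quote it, it can be proved by running the arguments behind Proposition 3.4 and Theorem 3.5 for the single array $\{\nu_{nk}\}_{n,k}$: the identity $F_{\nu_n}^{-1}(z)-z=\sum_{k=1}^{k_n}\bigl(F_{\nu_{nk}}^{-1}(z)-z\bigr)$, normality of the relevant family, and a Nevanlinna-type bound forcing finiteness of $\sup_n\sigma_n'(\mathbb{R})$ together give the L\'evy--Hin\v{c}in form (2.4) for $F_{\nu}^{-1}$. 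Everything after that step is a routine assembly of results already established in the paper.
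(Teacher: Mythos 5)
Your proposal is correct and follows essentially the same route as the paper: the paper likewise first invokes the free analogue of Hin\v{c}in's theorem from \cite{BPHincin} to conclude that $\nu$ is $\boxplus$-infinitely divisible, and then deduces the result from Theorems 3.5 and 4.1. You merely spell out explicitly the intermediate step (identifying $\Phi_{(\mu,\nu)}=E_{\nu_{\uplus}^{\gamma,\sigma}}$ on a truncated cone and extending it to $\mathbb{C}^{+}$ to verify condition (3) of Theorem 4.1), which the paper leaves implicit.
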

\begin{proof}
It was proved in \cite{BPHincin} that the measure $\nu$ must be
$\boxplus$-infinitely divisible. Therefore the result follows immediately
from Theorems 3.5 and 4.1.
\end{proof}

\section{Stable Laws}

In this section we determine all $\boxplus_{\text{c}}$-stable pairs
of measures, which are defined as follows. Denote by $\mathcal{M}\times\mathcal{M}$
the set of all pairs of measures $(\mu,\nu)$, where $\mu,\nu\in\mathcal{M}$.
Two pairs of measures $(\mu_{1},\nu_{1})$ and $(\mu_{2},\nu_{2})$
in $\mathcal{M}\times\mathcal{M}$ are said to be \emph{equivalent}
if there exist real numbers $a,b$, with $a>0$, such that $d\mu_{2}(t)=d\mu_{1}(at+b)$
and $d\nu_{2}(t)=d\nu_{1}(at+b)$; we indicate this by writing $(\mu_{1},\nu_{1})\sim(\mu_{2},\nu_{2})$.
By analogy with classical probability theory, we say a pair of measures
$(\mu,\nu)\in\mathcal{M}\times\mathcal{M}$ is \emph{$\boxplus_{\text{c}}$-stable}
if $(\mu_{1},\nu_{1})\boxplus_{\text{c}}(\mu_{2},\nu_{2})\sim(\mu,\nu)$
whenever $(\mu_{1},\nu_{1})\sim(\mu,\nu)\sim(\mu_{2},\nu_{2})$.

\begin{rem}
Note that if $d\mu_{2}(t)=d\mu_{1}(at+b)$ and $d\nu_{2}(t)=d\nu_{1}(at+b)$,
where $a>0$, then (2.2) shows that \begin{equation}
\Phi_{(\mu_{2},\nu_{2})}(z)=\frac{1}{a}\left[\Phi_{(\mu_{1},\nu_{1})}(az)-b\right]\label{eq:5.1}\end{equation}
in a truncated cone. Conversely, if pairs $(\mu_{1},\nu_{1})$ and
$(\mu_{2},\nu_{2})$ are such that $d\nu_{2}(t)=d\nu_{1}(at+b)$,
where $a>0$, and (5.1) holds in a truncated cone, then \[
d\mu_{2}(t)=d\mu_{1}(at+b).\]

\end{rem}
\begin{prop}
If $(\mu,\nu)$ is $\boxplus_{\text{c}}$-stable, then $(\mu,\nu)$
is $\boxplus_{\text{c}}$-infinitely divisible.
\end{prop}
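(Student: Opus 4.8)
The plan is to deduce $\boxplus_{\text{c}}$-infinite divisibility of a stable pair from the characterization in Theorem 4.1, by producing for each $n$ a pair $(\mu_n,\nu_n)$ whose $n$-fold $\boxplus_{\text{c}}$-convolution is $(\mu,\nu)$. First I would record the obvious reduction: the stability hypothesis, applied iteratively, shows that if $(\mu_1,\nu_1)\sim\cdots\sim(\mu_n,\nu_n)\sim(\mu,\nu)$ then $(\mu_1,\nu_1)\boxplus_{\text{c}}\cdots\boxplus_{\text{c}}(\mu_n,\nu_n)\sim(\mu,\nu)$. So the task is to choose a single rescaled copy $(\widetilde\mu,\widetilde\nu)$ of $(\mu,\nu)$, equivalent to $(\mu,\nu)$, whose $n$-fold self-$\boxplus_{\text{c}}$-convolution is \emph{exactly} $(\mu,\nu)$ rather than merely equivalent to it.

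Next I would handle the second component first. Since $\nu$ is $\boxplus$-stable (the $\nu$-components of equivalent pairs rescale the same way, so $\nu_1\boxplus\nu_2\sim\nu$ whenever $\nu_1\sim\nu\sim\nu_2$), the classical free stability theory gives that $\nu$ is $\boxplus$-infinitely divisible; in particular there is, for each $n$, a measure $\nu_n$ with $\nu=\nu_n^{\boxplus n}$, and one knows $\nu_n\sim\nu$ can be arranged. Then, writing the rescaling that takes $\nu$ to $\nu_n$ as $d\nu_n(t)=d\nu(a_nt+b_n)$ with $a_n>0$, I would use Remark 5.1: define $\mu_n$ by the companion rescaling $d\mu_n(t)=d\mu(a_nt+b_n)$, so that by (5.1) we have $\Phi_{(\mu_n,\nu_n)}(z)=\tfrac{1}{a_n}[\Phi_{(\mu,\nu)}(a_nz)-b_n]$ in a truncated cone. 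The point is that $(\mu_n,\nu_n)\sim(\mu,\nu)$ and $(\mu_n,\nu_n)\sim(\mu_n,\nu_n)$ trivially, so stability gives $(\mu_n,\nu_n)^{\boxplus_{\text{c}} n}\sim(\mu,\nu)$; I must upgrade $\sim$ to $=$.

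To get the exact identity, I would compute the first component of $(\mu_n,\nu_n)^{\boxplus_{\text{c}} n}$ via additivity of $\Phi$: its $\Phi$-transform is $n\Phi_{(\mu_n,\nu_n)}(z)=\tfrac{n}{a_n}[\Phi_{(\mu,\nu)}(a_nz)-b_n]$, and its second component is $\nu_n^{\boxplus n}=\nu$. On the other hand $(\mu,\nu)$ equivalent to $(\mu_n,\nu_n)^{\boxplus_{\text{c}} n}$ via some $(a,b)$ forces (again by Remark 5.1, using that the $\nu$-components already agree up to that same $(a,b)$) the relation between the two $\Phi$'s to be the rescaling by $(a,b)$; but since both pairs have the \emph{same} second component $\nu$, the rescaling parameters must be $a=1$, $b=0$, whence $(\mu,\nu)=(\mu_n,\nu_n)^{\boxplus_{\text{c}} n}$ exactly. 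Thus $(\mu,\nu)$ is $\boxplus_{\text{c}}$-infinitely divisible. I expect the main obstacle to be precisely this last bookkeeping step --- carefully arguing that the equivalence relating $(\mu,\nu)$ to its $n$-fold decomposition must be trivial because the $\nu$-components are literally equal, so that $F_\nu$ cannot be invariant under a nontrivial affine rescaling unless that rescaling is the identity (this uses $F_\nu(z)=z(1+o(1))$ at infinity). Everything else is a routine combination of free stability theory, Remark 5.1, and the additivity of $\Phi_{(\cdot,\cdot)}$.
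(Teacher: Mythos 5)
Your outline is fine up to the last step, but the last step --- which you yourself identify as the crux --- is not correct as stated. You claim that if $(\mu_n,\nu_n)^{\boxplus_{\text{c}}n}\sim(\mu,\nu)$ via some $(a,b)$ and both pairs have the same second component $\nu$, then necessarily $a=1$, $b=0$, ``because $F_\nu$ cannot be invariant under a nontrivial affine rescaling, using $F_\nu(z)=z(1+o(1))$.'' Neither half of this is right. The invariance relation coming from $d\nu(t)=d\nu(at+b)$ is $aF_{\nu}(z)=F_{\nu}(az+b)$, and the asymptotics $F_\nu(z)=z(1+o(1))$ make both sides $az(1+o(1))$ for \emph{every} $a>0$, so they pin down nothing. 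Worse, the uniqueness claim itself is false when $\nu$ is a Dirac mass: $F_{\delta_{t_0}}(z)=z-t_0$ satisfies $aF_{\delta_{t_0}}(z)=F_{\delta_{t_0}}(az+b)$ whenever $b=t_0(1-a)$, and pairs $(\mu,\delta_c)$ are perfectly legitimate (and can be $\boxplus_{\text{c}}$-stable, since then $\boxplus_{\text{c}}$ is a shifted boolean convolution). For nondegenerate $\nu$ your conclusion is true, but the correct argument is not the asymptotics of $F_\nu$; it is, e.g., that a probability measure invariant under a nontrivial affine map $t\mapsto(t-b)/a$ must be the point mass at the fixed point (contraction/iteration for $a\neq1$, impossibility of translation invariance for $a=1$, $b\neq0$). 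So as written your proof has a genuine hole exactly where you expected the difficulty: the degenerate case is uncovered, and the nondegenerate case is justified by an argument that does not work.

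The gap is repairable --- treat $\nu=\delta_c$ separately (there $\Phi_{(\mu,\delta_c)}(z)=E_\mu(z+c)$ always admits a Nevanlinna representation, so Theorem 4.1(2) gives infinite divisibility outright), and use the fixed-point argument when $\nu$ is nondegenerate --- but note that the paper's proof avoids the issue entirely and is more economical. Rather than proving that the equivalence relating $(\mu,\nu)$ to its $n$-fold decomposition is trivial, the paper constructs exact roots directly: from $(\mu,\nu)\boxplus_{\text{c}}(\mu,\nu)\sim(\mu,\nu)$ with parameters $(a_2,b_2)$ it sets $d\nu_2(t)=d\nu(a_2t+b_2/2)$, $d\mu_2(t)=d\mu(a_2t+b_2/2)$ (splitting the translation in half) and verifies $\nu=\nu_2\boxplus\nu_2$ and $\mu=\mu_2\boxplus_{\text{c}}\mu_2$ by the identities for $F^{-1}_{\nu}$ and $\Phi_{(\mu,\nu)}$ under affine rescaling, then iterates for each $n$. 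No uniqueness of the affine map, no appeal to free stability theory for $\nu$, and no case distinction are needed. If you want to keep your structure, import this splitting trick in place of your uniqueness claim; otherwise add the fixed-point lemma plus the degenerate case.
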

\begin{proof}
The $\boxplus_{\text{c}}$-stability of $(\mu,\nu)$ implies that
$(\mu\boxplus_{\text{c}}\mu,\nu\boxplus\nu)=(\mu,\nu)\boxplus_{\text{c}}(\mu,\nu)\sim(\mu,\nu)$,
that is, there exist $a_{2}>0$ and $b_{2}\in\mathbb{R}$ such that\[
d\mu(t)=d(\mu\boxplus_{\text{c}}\mu)(a_{2}t+b_{2})\qquad\text{and}\qquad d\nu(t)=d(\nu\boxplus\nu)(a_{2}t+b_{2}).\]
The analytic description of free convolution implies that \begin{eqnarray*}
F_{\nu}^{-1}(z) & = & \frac{1}{a_{2}}\left[F_{\nu\boxplus\nu}^{-1}(a_{2}z)-b_{2}\right]\\
 & = & \frac{2}{a_{2}}\left[F_{\nu}^{-1}(a_{2}z)-\frac{b_{2}}{2}\right]-z\\
 & = & 2F_{\nu_{2}}^{-1}(z)-z=F_{\nu_{2}\boxplus\nu_{2}}^{-1}(z),\end{eqnarray*}
where $d\nu_{2}(t)=d\nu(a_{2}t+b_{2}/2)$. This shows that $\nu=\nu_{2}\boxplus\nu_{2}$.
Moreover, Remark 5.1 and Proposition 2.2 show that \begin{eqnarray*}
\Phi_{(\mu,\nu)}(z) & = & \frac{1}{a_{2}}\left[\Phi_{(\mu\boxplus_{\text{c}}\mu,\nu\boxplus\nu)}(a_{2}z)-b_{2}\right]\\
 & = & \frac{2}{a_{2}}\left[\Phi_{(\mu,\nu)}(a_{2}z)-\frac{b_{2}}{2}\right]\\
 & = & 2\Phi_{(\mu_{2},\nu_{2})}(z)=\Phi_{(\mu_{2}\boxplus_{\text{c}}\mu_{2},\nu_{2}\boxplus\nu_{2})}(z)\end{eqnarray*}
in a truncated cone, where $d\mu_{2}(t)=d\mu(a_{2}t+b_{2}/2)$. Therefore,
we have $\mu=\mu_{2}\boxplus_{\text{c}}\mu_{2}$.

Next, we consider $(\mu_{2},\nu_{2})\sim(\mu,\nu)=(\mu_{2}\boxplus_{\text{c}}\mu_{2},\nu_{2}\boxplus\nu_{2})$.
By a slight modification of the above argument, it is easy to verify
that there exist $a_{3}>0$ and $b_{3}\in\mathbb{R}$ such that $\nu=\nu_{3}\boxplus\nu_{3}\boxplus\nu_{3}$
and $\mu=\mu_{3}\boxplus_{\text{c}}\mu_{3}\boxplus_{\text{c}}\mu_{3}$,
where $d\nu_{3}(t)=d\nu_{2}(a_{3}t+b_{3}/3)$ and $d\mu_{3}(t)=d\mu_{2}(a_{3}t+b_{3}/3)$.
Continuing in this fashion, we see that the pair $(\mu,\nu)$ is $\boxplus_{\text{c}}$-infinitely
divisible.
\end{proof}
Recall from \cite{BVunbdd} that an analytic function $\phi:\,\mathbb{C}^{+}\rightarrow\mathbb{C}^{-}\cup\mathbb{R}$
is said to be \emph{stable} if for every $a>0$, there exist $b>0$
and $c\in\mathbb{R}$ such that \[
\phi(z)+\frac{1}{a}\phi(az)=\frac{1}{b}\phi(bz)+c,\qquad z\in\mathbb{C}^{+}.\]
The next result follows immediately from Remark 5.1.

\begin{prop}
A $\boxplus_{\text{c}}$-infinitely divisible pair of measures $(\mu,\nu)$
is $\boxplus_{\text{c}}$-stable if and only if the functions $\Phi_{(\mu,\nu)}$
and $F_{\nu}^{-1}(z)-z$ are stable.
\end{prop}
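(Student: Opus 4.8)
The plan is to translate everything into the transforms, where $\boxplus_{\text{c}}$ and $\boxplus$ become addition, and then to read the two stability relations off Remark 5.1. First I would record what the infinite-divisibility hypothesis provides. Since $(\mu,\nu)$ is $\boxplus_{\text{c}}$-infinitely divisible, the measure $\nu$ is $\boxplus$-infinitely divisible, so by (2.4) the function $z\mapsto F_\nu^{-1}(z)-z$ continues analytically to $\mathbb{C}^+$ as an analytic map into $\mathbb{C}^-\cup\mathbb{R}$; and by Theorem 4.1, $\Phi_{(\mu,\nu)}$ continues to $\mathbb{C}^+$ with the same target. Thus both functions are of the type for which ``stable'' was defined, and the additivity identities $\Phi_{(\mu_1,\nu_1)\boxplus_{\text{c}}(\mu_2,\nu_2)}=\Phi_{(\mu_1,\nu_1)}+\Phi_{(\mu_2,\nu_2)}$ (Proposition 2.2) and $F_{\nu_1\boxplus\nu_2}^{-1}(z)-z=(F_{\nu_1}^{-1}(z)-z)+(F_{\nu_2}^{-1}(z)-z)$ hold in a truncated cone.

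Next I would unwind the definition of $\boxplus_{\text{c}}$-stability. For $a>0$ and $b\in\mathbb{R}$, let $(\mu_{a,b},\nu_{a,b})$ denote the pair obtained from $(\mu,\nu)$ by the affine change $t\mapsto at+b$. Remark 5.1 gives $\Phi_{(\mu_{a,b},\nu_{a,b})}(z)=\frac1a[\Phi_{(\mu,\nu)}(az)-b]$, and the same computation applied to $F$ gives $F_{\nu_{a,b}}^{-1}(z)-z=\frac1a[F_\nu^{-1}(az)-az]-\frac ba$. Writing $\Phi:=\Phi_{(\mu,\nu)}$ and $\psi(z):=F_\nu^{-1}(z)-z$, it follows from the additivity identities that the $\Phi$-transform of $(\mu_{a_1,b_1},\nu_{a_1,b_1})\boxplus_{\text{c}}(\mu_{a_2,b_2},\nu_{a_2,b_2})$ is $\frac1{a_1}\Phi(a_1z)+\frac1{a_2}\Phi(a_2z)-\frac{b_1}{a_1}-\frac{b_2}{a_2}$, while the function $F^{-1}(z)-z$ attached to its second component is $\frac1{a_1}\psi(a_1z)+\frac1{a_2}\psi(a_2z)-\frac{b_1}{a_1}-\frac{b_2}{a_2}$. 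By the converse half of Remark 5.1, this convolved pair is equivalent to $(\mu,\nu)$ exactly when there exist $a>0$ and $b\in\mathbb{R}$ making the second of these equal to $\frac1a[F_\nu^{-1}(az)-az]-\frac ba$ and the first equal to $\frac1a[\Phi(az)-b]$. Substituting $w=a_1z$ and putting $t:=a_2/a_1$, each of these two requirements becomes precisely the defining relation $\varphi(w)+\frac1t\varphi(tw)=\frac1{b'}\varphi(b'w)+c'$ of a stable function, applied to $\psi$ and to $\Phi$ respectively. Hence $(\mu,\nu)$ is $\boxplus_{\text{c}}$-stable if and only if, for every $t>0$, both $\psi$ and $\Phi$ admit such a relation with a common pair $(b',c')$.

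From here the forward implication is immediate: taking $a_1=1$ and $a_2=t$ (the $b_i$ being irrelevant to the shape of the relations), the two requirements above are exactly the stability relations for $\psi$ and for $\Phi$ at the parameter $t$, so both $F_\nu^{-1}(z)-z$ and $\Phi_{(\mu,\nu)}$ are stable. The reverse implication is where the substantive work lies, and is the step I expect to be the main obstacle: starting from the separate stability of $\psi$ and of $\Phi$, one must verify that for each $t$ the scaling parameter $b'$, and then the additive constant $c'$, produced by the stability of $\psi$ can be used simultaneously in the relation for $\Phi$, whereas the bare definition only furnishes \emph{some} admissible $(b',c')$ for each function on its own. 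I would handle this by exploiting the rigidity of the stability relation --- for fixed $t$ and fixed $\varphi$, the pair $(b',c')$ in $\varphi(w)+\frac1t\varphi(tw)=\frac1{b'}\varphi(b'w)+c'$ is forced by $\varphi$ --- which one reads off from the Nevanlinna integral representations of $\Phi$ and $\psi$ made available by the infinite-divisibility hypothesis, and then matching the parameters. Once a common $(a,b)$ has been secured in this way, the converse direction of Remark 5.1 together with Proposition 2.2 reassembles it into the statement that the convolved pair is equivalent to $(\mu,\nu)$, completing the argument.
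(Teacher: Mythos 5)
Your reduction of pair-stability to the statement that, for every $t>0$, there exist a \emph{common} $b'>0$ and $c'\in\mathbb{R}$ such that both $\Phi=\Phi_{(\mu,\nu)}$ and $\psi(z)=F_{\nu}^{-1}(z)-z$ satisfy $\varphi(w)+\frac{1}{t}\varphi(tw)=\frac{1}{b'}\varphi(b'w)+c'$ is correct, and the ``only if'' half then follows at once, which is exactly the content of the paper's one-line proof via Remark 5.1. The genuine gap is in your plan for the ``if'' half. Rigidity does hold in the nondegenerate cases --- for a fixed $t$ and a fixed nonconstant stable $\varphi$ the pair $(b',c')$ is unique --- but uniqueness for each function \emph{separately} provides no mechanism for ``matching the parameters'': the constants are determined by the individual function through its form in Theorem 5.4, and for two different stable functions they are in general different. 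Concretely, let $\nu$ be the standard semicircle law, so $\psi(z)=1/z$, and let $\mu$ be the standard Cauchy law, so $E_{\mu}\equiv-i$ and hence $\Phi_{(\mu,\nu)}\equiv-i$. The pair $(\mu,\nu)$ is $\boxplus_{\text{c}}$-infinitely divisible (Theorem 4.1(2) with $\gamma=0$ and $d\sigma(t)=dt/\pi(1+t^{2})$), and both functions are stable (cases (1) and (2) of Theorem 5.4). Yet at $t=1$ the unique admissible dilation constant is $b'=1/2$ for the constant $-i$ and $b'=1/\sqrt{2}$ for $1/z$, so no common pair $(b',c')$ exists; correspondingly $(\mu,\nu)\boxplus_{\text{c}}(\mu,\nu)$ is the pair (Cauchy of scale $2$, semicircle of variance $2$), which cannot be carried to (Cauchy, semicircle) by a single affine map. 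So under your elementwise reading of ``the functions are stable'' the converse cannot be completed --- it is false --- and no rigidity-plus-matching argument can rescue it.

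The conclusion to draw is that the proposition must be understood (and this is the only reading under which the derivation from Remark 5.1 is ``immediate,'' as the paper asserts) as saying that $\Phi_{(\mu,\nu)}$ and $F_{\nu}^{-1}(z)-z$ satisfy the stability relations \emph{jointly}: for every $a>0$ there is one pair $b>0$, $c\in\mathbb{R}$ serving both functions simultaneously. Under that reading both implications are immediate from the equivalence you already derived, and the ``substantive work'' you anticipated in the converse disappears; under the literal separate reading, the example above shows that work cannot be done.
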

A complete characterization of stable analytic functions was proved
in \cite{BVunbdd}. We will write out this result below for the sake
of completeness. The complex functions in the following list are given
by their principal value in the upper half plane.

\begin{thm}
The following is a complete list of the stable analytic functions
$\phi:\,\mathbb{C}^{+}\rightarrow\mathbb{C}^{-}\cup\mathbb{R}$.
\begin{enumerate}
\item $\phi(z)=a+ib$, $a\in\mathbb{R}$ and $b\leq0$.
\item $\phi(z)=a+bz^{-\alpha+1}$, $a\in\mathbb{R}$, $\alpha\in(1,2]$,
$b\neq0$, and $\arg b\in[(\alpha-2)\pi,0]$.
\item $\phi(z)=a+bz^{-\alpha+1}$, $a\in\mathbb{R}$, $\alpha\in(0,1)$,
$b\neq0$, and $\arg b\in[-\pi,(\alpha-1)\pi]$.
\item $\phi(z)=a+b\log z$, $a\in\mathbb{C}^{-}\cup\mathbb{R}$ and $b<0$.
\end{enumerate}
\end{thm}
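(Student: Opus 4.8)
I would first convert the multiplicative functional equation into a rigidity statement about $\phi'$, solve it, integrate, and only then impose the two genuine constraints (range in $\mathbb{C}^{-}\cup\mathbb{R}$, and reality of the constant $c$). If $\phi$ is constant, say $\phi\equiv a+ib$, the hypothesis $\phi(\mathbb{C}^{+})\subseteq\mathbb{C}^{-}\cup\mathbb{R}$ forces $b\le 0$, and a direct check (match imaginary parts to solve for the target scaling $b$, take $c=0$ when $b\ne 0$, or $c$ real when $\phi$ is real) shows such a $\phi$ is stable; this is item (1). So assume $\phi$ is not constant. Differentiating the identity $\phi(z)+\frac1a\phi(az)=\frac1b\phi(bz)+c$ in $z$ cancels $c$ and yields that $g:=\phi'$, analytic on $\mathbb{C}^{+}$ and not identically $0$, satisfies: for every $a>0$ there is $b(a)>0$ with $g(z)+g(az)=g(b(a)z)$ on $\mathbb{C}^{+}$.

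Next I would extract a single power from $g$. Putting $a=1$ gives $g(cz)=2g(z)$ with $c:=b(1)$, and $c\ne 1$ since $g\not\equiv 0$. Pass to the logarithmic coordinate $z=e^{w}$, a conformal bijection of the strip $S=\{0<\Im w<\pi\}$ onto $\mathbb{C}^{+}$, let $\alpha\ne 0$ be the real number with $c^{\alpha}=\tfrac12$, and set $h(z)=z^{\alpha}g(z)$ (principal branch). Then $h(cz)=h(z)$, so $\widetilde h(w):=h(e^{w})$ is a holomorphic $(\log c)$-periodic function on $S$, hence has a locally uniformly convergent Fourier expansion $\widetilde h(w)=\sum_{n\in\mathbb{Z}}a_{n}e^{i\omega_{n}w}$, $\omega_{n}=2\pi n/\log c$. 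Rewriting $g(z)+g(az)=g(b(a)z)$ in the coordinate $w$ (with $a=e^{u}$ and $v:=\log b(e^{u})$) gives $\widetilde h(w)+e^{-\alpha u}\widetilde h(w+u)=e^{-\alpha v}\widetilde h(w+v)$, and comparing the coefficient of $e^{i\omega_{n}w}$ on both sides yields, with $x:=e^{-\alpha u}>0$, $y:=e^{-\alpha v}>0$ and $s_{n}:=1-i\omega_{n}/\alpha$,
\[ a_{n}\bigl(1+x^{s_{n}}-y^{s_{n}}\bigr)=0\qquad\text{for all }x>0. \]
For any $n$ with $a_{n}\ne 0$ this forces $y^{s_{n}}=1+x^{s_{n}}$; since $\Re s_{n}=1$, taking moduli gives $y=y(x)=|1+x^{s_{n}}|$, and then $y(x)^{s_{n}}=1+x^{s_{n}}$ must hold for all $x>0$. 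Both sides tend to $1$ as $x\to0^{+}$, so their arguments are equal for small $x$; comparing leading asymptotics yields $\tau_{n}\cos(\tau_{n}\log x)-\sin(\tau_{n}\log x)=o(1)$ as $x\to0^{+}$, where $\tau_{n}=\Im s_{n}$, and since $t\mapsto\tau_{n}\cos t-\sin t$ has constant amplitude $\sqrt{\tau_{n}^{2}+1}$ and $\tau_{n}\log x$ is unbounded, this is impossible unless $\tau_{n}=0$, i.e.\ $n=0$. Hence $a_{n}=0$ for $n\ne 0$, so $\widetilde h\equiv a_{0}$ with $a_{0}\ne 0$; that is, $g(z)=a_{0}z^{-\alpha}$ on $\mathbb{C}^{+}$.

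Finally I would integrate and impose the surviving constraints. Integration gives $\phi(z)=a+bz^{1-\alpha}$ with $b=a_{0}/(1-\alpha)\ne 0$ when $\alpha\ne 1$, and $\phi(z)=a+b\log z$ with $b=a_{0}\ne 0$ when $\alpha=1$ (principal branches, as in the statement). For the range: writing $\gamma=1-\alpha$, one needs $\Im(bz^{\gamma})=|b|\,|z|^{\gamma}\sin(\arg b+\gamma\arg z)\le 0$ for all $\arg z\in(0,\pi)$, and since a transversal zero of $\sin$ is excluded it must be strictly negative there; because $\arg z$ sweeps an interval of length $\pi$ this forces $|\gamma|\le 1$ (whence $\alpha\in(0,2]$) together with the displayed interval constraints on $\arg b$, and in the logarithm case ($\gamma=0$) it forces $b$ real with $b<0$ and $\Im a\le 0$. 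For reality of $c$: matching the $z^{\gamma}$-coefficients in the stability identity forces $\mu^{\gamma-1}=1+\lambda^{\gamma-1}$ and then $c=a(1+1/\lambda-1/\mu)$; when $\gamma\ne 0$ this $\mu$ fails to equal $\lambda/(\lambda+1)$ for some $\lambda$, so $c\in\mathbb{R}$ forces $a\in\mathbb{R}$, whereas when $\gamma=0$ one gets $\mu=\lambda/(\lambda+1)$ automatically and $c$ is always real, leaving $a\in\mathbb{C}^{-}\cup\mathbb{R}$. Splitting $\gamma\in(0,1)$, $\gamma\in[-1,0)$ and $\gamma=0$ then reproduces items (3), (2), (4); conversely each function in the list is stable by the same coefficient matching (choosing $\mu$ from $\mu^{\gamma-1}=1+\lambda^{\gamma-1}$, resp.\ $\mu=\lambda/(\lambda+1)$, and the corresponding real $c$) and has range in $\mathbb{C}^{-}\cup\mathbb{R}$ by the range analysis.

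The main obstacle will be the rigidity step in the second paragraph: showing that the functional equation for $g$ admits nothing beyond one real power, i.e.\ that every nonzero Fourier mode of $\widetilde h$ is incompatible with the scaling identity $y(x)^{s_{n}}=1+x^{s_{n}}$, and doing so uniformly in $n$ without assuming any regularity of $a\mapsto b(a)$. Everything downstream — the integration, the range condition, and the reality of $c$ — is then routine bookkeeping.
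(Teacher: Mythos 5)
Your statement is one the paper does not prove at all: Theorem 5.4 is quoted from Bercovici--Voiculescu \cite{BVunbdd} ``for the sake of completeness,'' so there is no internal argument to compare yours against. Taken on its own terms, your main rigidity step is sound and attractive: differentiating kills $c$; the choice $a=1$ gives $g(cz)=2g(z)$ with $c\neq 1$; $h(z)=z^{\alpha}g(z)$ is invariant under $z\mapsto cz$; comparing Fourier--Laurent modes on the strip gives $a_{n}\bigl(1+x^{s_{n}}-y^{s_{n}}\bigr)=0$; and your modulus-plus-argument asymptotics as $x\to 0^{+}$ do kill every mode with $n\neq 0$. (Your closing worry is unnecessary: no uniformity in $n$ and no regularity of $a\mapsto b(a)$ is needed, since a single nonzero mode already yields the contradiction.) The integration step and the power cases, including forcing $a\in\mathbb{R}$ from the reality of $c$ at $\lambda=1$, are also correct.

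The genuine gap is in the logarithmic case, which you dismiss as routine bookkeeping. For $\gamma=0$ one has $\Im\phi(z)=\Im a+b\arg z$, so the range condition forces $b$ real but \emph{not} $b<0$: besides ($b<0$, $\Im a\le 0$) it also admits ($b>0$, $\Im a\le-\pi b$), and your own coefficient matching shows that for every real $b$ the stability identity holds with $\mu=\lambda/(1+\lambda)$ and real $c$. Concretely, $\phi(z)=\log z-i\pi=\int_{0}^{\infty}\frac{1+tz}{z-t}\,\frac{dt}{1+t^{2}}$ (the Voiculescu transform of a totally skewed free $1$-stable law) maps $\mathbb{C}^{+}$ into $\mathbb{C}^{-}$ and is stable in the sense of the definition given in Section 5, yet it is not of the form in item (4). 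So the argument you outline actually produces a strictly larger family in the $\alpha=1$ case than the list you are asked to prove, and nothing in the paper's definition eliminates it; reconciling this---either by locating an additional constraint or convention in \cite{BVunbdd}'s original formulation of the $\alpha=1$ case, or by amending the case analysis---is precisely the non-routine part, and as written your proposal does not establish item (4) in the form stated.
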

Finally, we briefly outline the role of $\boxplus_{\text{c}}$-stable
pairs of measures in relation to the limit theorems. Following the
ideas in \cite{PataLevy}, one can show that a pair of measures $(\mu,\nu)$
is $\boxplus_{\text{c}}$-stable if and only if there exist $A_{n}>0$,
$B_{n}\in\mathbb{R}$ and measures $\mu^{\prime},\nu^{\prime}\in\mathcal{M}$
so that the measure $\mu$ (resp., $\nu$) is the weak limit of c-free
(resp., free) convolutions $\underbrace{\mu_{n}\boxplus_{\text{c}}\mu_{n}\boxplus_{\text{c}}\cdots\boxplus_{\text{c}}\mu_{n}}_{n\,\text{times}}$
(resp., $\underbrace{\nu_{n}\boxplus\nu_{n}\boxplus\cdots\boxplus\nu_{n}}_{n\,\text{times}}$),
where the measure $\mu_{n}$ and $\nu_{n}$ are given by \[
d\mu_{n}(t)=d\mu^{\prime}(A_{n}t+B_{n}),\qquad\text{and}\qquad d\nu_{n}(t)=d\nu^{\prime}(A_{n}t+B_{n}).\]
We will not provide the details of the proof of the above assertion
because it is quite similar to those in the free case \cite{PataLevy}.
The reader will have no difficulty in providing his/her own proof.

\subsubsection*{Acknowledgments}

The author is grateful to his adviser Professor Hari Bercovici for
his guidance and encouragement throughout this work. He also benefited
a lot from various conversations with Professors Serban Belinschi
and Richard Bradley.

\end{document}